\documentclass[10pt]{article}

\usepackage{amsmath,amsthm,amssymb,bbm,bm}
\usepackage[bookmarksnumbered=true,pagebackref]{hyperref}
\usepackage{tocbibind}
\usepackage{enumitem}
\usepackage{mleftright}
\usepackage{graphicx,caption,subcaption}

\newtheorem{theorem}{Theorem}
\newtheorem*{theorem*}{Theorem}

\newtheorem{proposition}{Proposition}
\newtheorem{lemma}{Lemma}

\theoremstyle{remark}
\newtheorem{remark}{Remark}
\newtheorem*{acknowledgments}{Acknowledgments}

\theoremstyle{definition}
\newtheorem{definition}{Definition}

\newtheoremstyle{notes}
{3pt}
{3pt}
{}
{}
{\bfseries}
{:}
{.4em}
{}
\theoremstyle{notes}
\newtheorem*{keywords}{Keywords}
\newtheorem*{subjclass}{AMS MSC 2020}

\newcommand{\D}[1]{\mathop{\mathrm{d}#1}}
\DeclareMathOperator{\conv}{CH}
\DeclareMathOperator{\hull}{TH}
\DeclareMathOperator{\strh}{SH}
\DeclareMathOperator{\erfc}{erfc}

\DeclareMathOperator{\csch}{csch}

\usepackage[draft]{todonotes} 

\title{Expected area of the star hull of planar Brownian motion and bridge}

\author{
Hugo Panzo
\\ 
\href{mailto:hugo.panzo@slu.edu}{\texttt{{\small hugo.panzo@slu.edu}}}
}

\date{\today}

\begin{document}

\maketitle

\begin{abstract}
We study the \emph{star hull} of planar Brownian motion and bridge, and relate this random compact set to the more familiar convex and topological hulls. Roughly speaking, the star hull is the smallest starshaped set (with respect to the origin) that contains the trace of the path. In particular, we prove that the expected areas of the star hulls are $\frac{3\pi}{8}$ and $\frac{\pi}{4}$ for planar Brownian motion and bridge, respectively. Along the way, we find the one-point marginal distribution of the radial functions of both traces. Our proofs rely on a detailed analysis of the first hitting time and place of a half-line by planar Brownian motion, and one of our main results is a remarkably simple expression for the Laplace transform of this joint law.
\end{abstract}

\begin{keywords}
Bessel bridge; convex hull; planar Brownian bridge; planar Brownian motion; star hull; topological hull.
\end{keywords}

\begin{subjclass}
Primary 60D05, 60J65; Secondary 60E10.
\end{subjclass}

\section{Introduction and main results}

The \emph{convex hull} of planar Brownian motion has been studied since the 1940s, beginning with the work of P.~L\'evy \cite{Levy}. Since then a rich literature has developed around quantitative and geometric properties of the convex hull, for example, exact formulas for the expected perimeter \cite{Takacs_solution} and area \cite{El_Bachir}, smoothness \cite{smooth} and curvature \cite{curvature} properties, laws of the iterated logarithm \cite{Khoshnevisan}, large deviations \cite{LDP}, and moment bounds on other geometric functionals and related inverse processes \cite{McRedmond_Xu, Jovalekic, hull_bounds}. The reader is encouraged to consult the references cited in these works as well as the survey \cite{Majumdar} for further results and background.

Alongside the convex hull, another fundamental object in the planar Brownian literature is the \emph{topological hull} of the path, also known as the \emph{Brownian hull}; it is the union of the trace with all of the bounded connected components of its complement in the plane. This set has connected complement and its boundary is the \emph{outer boundary} or \emph{frontier} of the Brownian motion. Deep connections between planar Brownian motion and Schramm--Loewner evolution were established by Lawler--Schramm--Werner, who showed in \cite{Lawler_Schramm_Werner} that, loosely speaking, the outer boundary of planar Brownian motion is chordal $\mathrm{SLE}_{8/3}$. These SLE techniques were later used to compute the expected area of the topological hull of the planar Brownian bridge; see \cite{Garban_Ferreras} and Theorem \ref{thm:bridge_areas} below.

In the present paper we bring a third hull into the probabilistic picture, namely, the \emph{star hull}. Given a set $\mathcal{A} \subset \mathbb{R}^2$, the star hull of $\mathcal{A}$, which we denote by $\strh(\mathcal{A})$, is the smallest starshaped set (with respect to the origin) that contains $\mathcal{A}$. In other words, $\strh(\mathcal{A})$ is obtained from $\mathcal{A}$ by filling in the lines of sight from the origin to all points of $\mathcal{A}$. While starshaped sets are classical in analysis and geometry \cite{starshaped}, the star hull seems to be much less studied; see \cite{Li, Klain1, Klain2, min_area_hull, star_hull, Kiderlen} for the only references we could find that deal with star hulls. In particular, to our knowledge the star hull has not previously appeared in the literature in connection with Brownian motion. However, following the posting of this preprint to the arXiv, the star hull of planar Brownian motion stopped upon exiting the unit disk has since been investigated in \cite{CH_until_exit}.

Let $\mathcal{K}\subset\mathbb{R}^2$ be a compact set containing the origin. We denote by $\hull(\mathcal{K})$, $\strh(\mathcal{K})$, and $\conv(\mathcal{K})$ the topological, star, and convex hulls of $\mathcal{K}$, respectively. It is not difficult to show that all three of these hulls are compact sets. Moreover, we prove in Lemma \ref{lem:inclusion_chain} below that the following inclusion chain holds
\begin{equation}\label{eq:inclusion_chain}
\mathcal{K} \subset \hull(\mathcal{K}) \subset \strh(\mathcal{K}) \subset \conv(\mathcal{K});
\end{equation}
see Section \ref{sec:three_hulls} for proofs of these claims along with formal definitions of the hulls. 

Planar Brownian motion $(\boldsymbol{W}_t:t\geq 0)$ starting at $\boldsymbol{0}$ and planar Brownian bridge $(\boldsymbol{B}_t:0\leq t\leq 1)$ from $\boldsymbol{0}$ to $\boldsymbol{0}$ provide us with two random compact sets that contain the origin, namely, their traces $\boldsymbol{W}_{[0,1]}$ and $\boldsymbol{B}_{[0,1]}$. From these sets we obtain the six random hulls
\begin{alignat}{3}
\mathcal{T}^{\mathrm{BM}} &:= \hull(\boldsymbol{W}_{[0,1]}), &\quad
\mathcal{S}^{\mathrm{BM}} &:= \strh(\boldsymbol{W}_{[0,1]}), &\quad
\mathcal{C}^{\mathrm{BM}} &:= \conv(\boldsymbol{W}_{[0,1]}),\label{eq:BM_hulls}\\
\nonumber\\
\mathcal{T}^{\mathrm{BB}} &:= \hull(\boldsymbol{B}_{[0,1]}), &\quad
\mathcal{S}^{\mathrm{BB}} &:= \strh(\boldsymbol{B}_{[0,1]}), &\quad
\mathcal{C}^{\mathrm{BB}} &:= \conv(\boldsymbol{B}_{[0,1]}).\label{eq:bridge_hulls}
\end{alignat}
Figure \ref{fig:illustrations} depicts all three hulls drawn for a random walk approximation to planar Brownian motion.


\begin{figure}[htbp]
    \centering
    \begin{subfigure}[b]{0.49\textwidth}
        \centering
        \includegraphics[width=\linewidth]{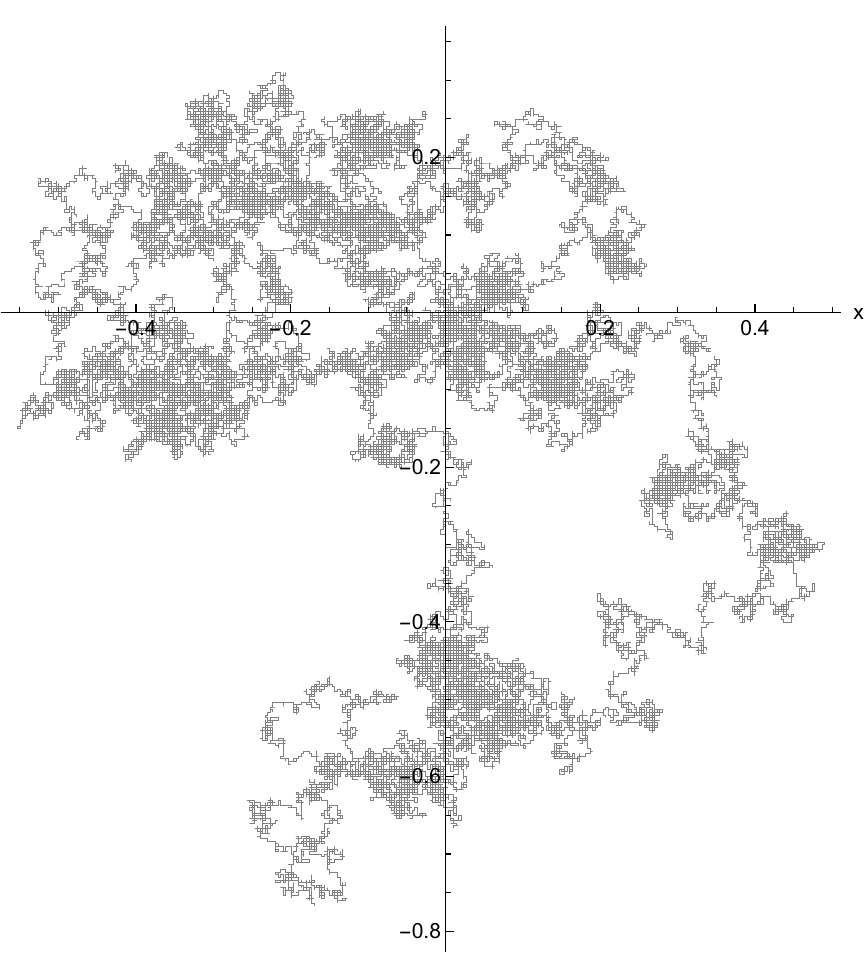}
        \caption{Trajectory of a scaled random walk.}
    \end{subfigure}
    \hfill
    \begin{subfigure}[b]{0.49\textwidth}
        \centering
        \includegraphics[width=\linewidth]{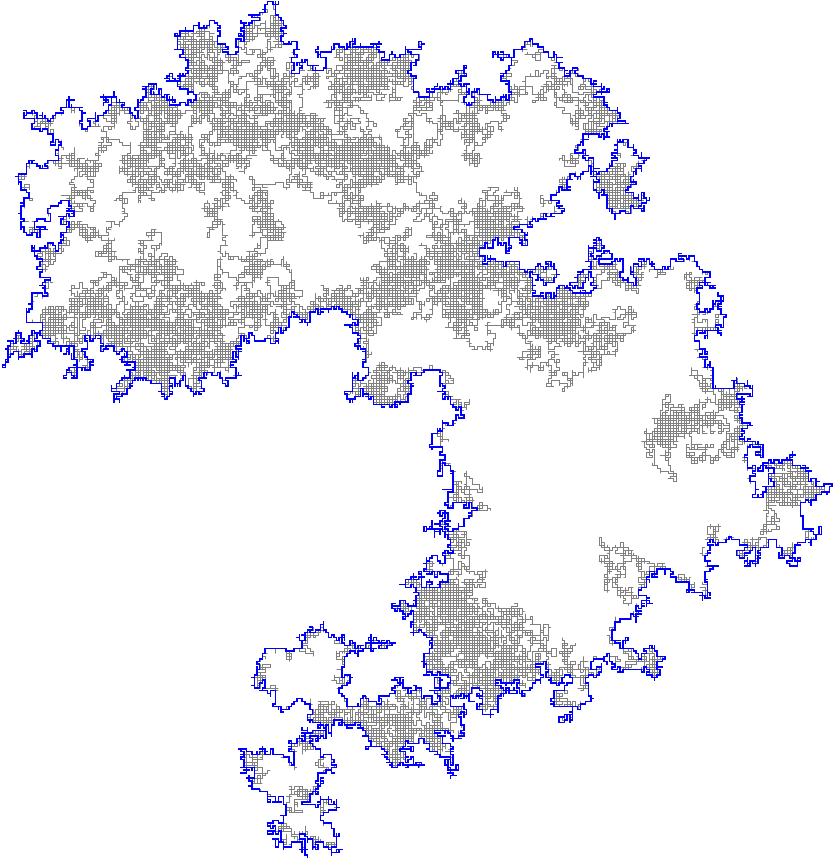}
        \caption{Topological hull of the same trajectory.}
    \end{subfigure}
    
    \vspace{1em} 
    
    \begin{subfigure}[b]{0.49\textwidth}
        \centering
        \includegraphics[width=\linewidth]{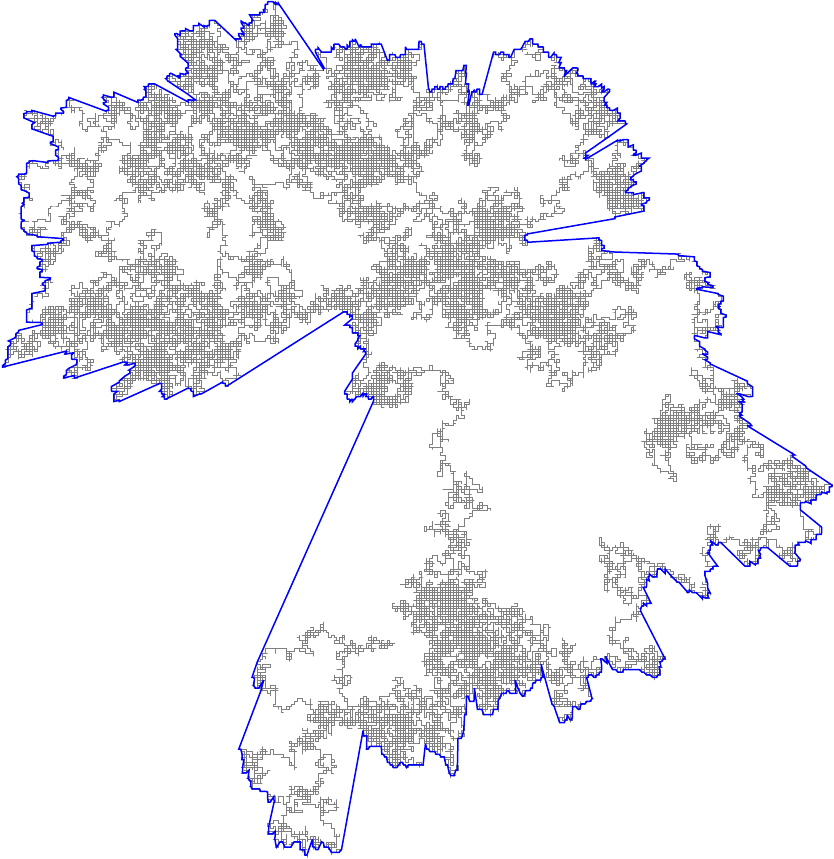}
        \caption{Star hull of the same trajectory.}
    \end{subfigure}
    \hfill
    \begin{subfigure}[b]{0.49\textwidth}
        \centering
        \includegraphics[width=\linewidth]{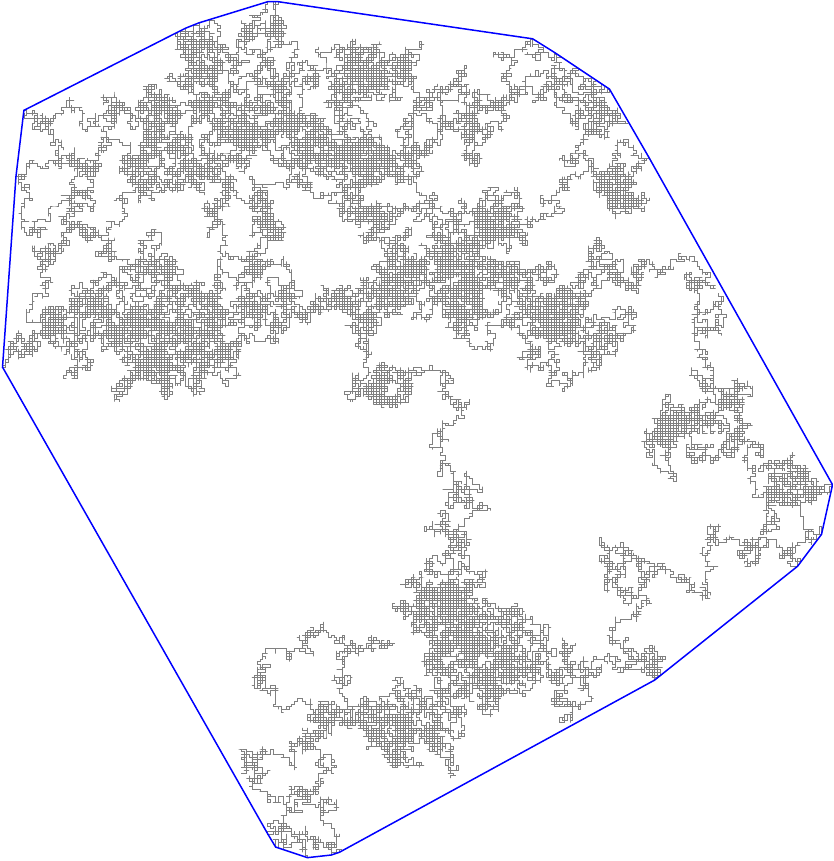}
        \caption{Convex hull of the same trajectory.}
    \end{subfigure}
    
    \caption{Illustrations of a single trajectory of a scaled $\mathbb{Z}^2$ nearest-neighbor random walk approximation to planar Brownian motion starting at the origin, together with each of the three kinds of hulls drawn for the same trajectory.}
    \label{fig:illustrations}
\end{figure}


Our first main result concerns the expected areas of the three bridge hulls.
\begin{theorem}\label{thm:bridge_areas}
Consider the planar Brownian bridge $(\boldsymbol{B}_t:0\leq t\leq 1)$ from $\boldsymbol{0}$ to $\boldsymbol{0}$, and let the corresponding topological hull $\mathcal{T}^{\mathrm{BB}}$, star hull $\mathcal{S}^{\mathrm{BB}}$, and convex hull $\mathcal{C}^{\mathrm{BB}}$ be defined as in \eqref{eq:bridge_hulls} and Section \ref{sec:three_hulls}. Then the expected areas of these hulls are 
\[
\mathbb{E}\big[\mathrm{area}(\mathcal{T}^{\mathrm{BB}})\big] = \frac{\pi}{5},
~~~~
\mathbb{E}\big[\mathrm{area}(\mathcal{S}^{\mathrm{BB}})\big] = \frac{\pi}{4},
~~~~
\mathbb{E}\big[\mathrm{area}(\mathcal{C}^{\mathrm{BB}})\big] = \frac{\pi}{3}.
\]
\end{theorem}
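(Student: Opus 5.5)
The topological and convex bridge values are known results; I would quote them and concentrate on the star hull. For $\mathcal{T}^{\mathrm{BB}}$, the value $\pi/5$ is the SLE computation of Garban--Trujillo Ferreras \cite{Garban_Ferreras}. For $\mathcal{C}^{\mathrm{BB}}$, the value $\pi/3$ follows from the Cauchy/Eddy-type formula: the expected area of a planar convex hull equals $\frac12\int_0^{2\pi}\mathbb{E}[h(\theta)^2-h'(\theta)^2]\,d\theta$, where $h$ is the support function. For the bridge this reduces to one-dimensional maxima of a Brownian bridge, as in \cite{Randon-Furling}/\cite{Majumdar}, and yields $\pi/3$.

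For the star hull, write $\rho(\theta)=\sup\{r\ge0: re^{i\theta}\in \boldsymbol{B}_{[0,1]}\}$ for the radial function of the trace. By definition $\strh$ is the union of segments from $\boldsymbol{0}$ to trace points, so $\mathcal{S}^{\mathrm{BB}}=\{re^{i\theta}:0\le r\le\rho(\theta)\}$. Then by Fubini and rotation invariance,
\[
\mathbb{E}\big[\mathrm{area}(\mathcal{S}^{\mathrm{BB}})\big]=\frac12\int_0^{2\pi}\mathbb{E}[\rho(\theta)^2]\,d\theta=\pi\,\mathbb{E}[\rho(0)^2].
\]
Equivalently,
\[
\mathbb{E}\big[\mathrm{area}(\mathcal{S}^{\mathrm{BB}})\big]=\int_{\mathbb{R}^2}\mathbb{P}\big(\boldsymbol{B}\text{ hits the ray }\{s\boldsymbol{x}:s\ge1\}\big)\,d\boldsymbol{x}=2\pi\int_0^\infty r\,\mathbb{P}(\rho(0)\ge r)\,dr.
\]
So the whole problem reduces to the one-point law of $\rho(0)$, namely the probability that the bridge hits the half-line $L_r=[r,\infty)\times\{0\}$.

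To compute $\mathbb{P}(\rho(0)\ge r)$, I decompose at the first hitting time $\tau_r$ of $L_r$ by the Brownian motion $\boldsymbol{W}$ started at $\boldsymbol{0}$. Write the bridge law as $\boldsymbol{W}$ conditioned on $\boldsymbol{W}_1=\boldsymbol{0}$ and apply the strong Markov property at $\tau_r$, where the hitting place is $(X,0)$ with $X\ge r$. This gives
\[
\mathbb{P}(\rho(0)\ge r)=2\pi\,\mathbb{E}\Big[\mathbf{1}_{\tau_r<1}\,\frac{1}{2\pi(1-\tau_r)}e^{-X^2/(2(1-\tau_r))}\Big].
\]
The needed input is the joint law of $(\tau_r,X)$. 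By scaling it suffices to take $r=1$. The complement of the slit $L_1$ is mapped by $z\mapsto\sqrt{z-1}$ to a half-plane, which gives the harmonic measure. The time component I would handle via a Laplace transform in $t$, since the Green's function of the slit plane is explicit in these coordinates. Because the integral over $r$ comes in the form $\int r\,\cdot\,dr$, one can exchange integrals and use scaling to turn the time-dependence into a Laplace transform at one fixed parameter. The target is a closed form for $\mathbb{E}[e^{-\lambda\tau_1}\mid X]$ or for the joint density, presumably a simple expression as the abstract advertises.

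The main obstacle is obtaining a tractable formula for the joint law of $(\tau_r,X)$ and then carrying out the integral against the Gaussian heat kernel. I would first try to avoid needing the full law. Exchange the order: integrate over $r$ first, using scaling $\tau_r\stackrel{d}{=}r^2\tau_1$ and $X_r\stackrel{d}{=}rX_1$. The substitution $s=r^2\tau_1$ then converts $\int_0^\infty r\,(\cdot)\,dr$ into a moment of $X_1^2/\tau_1$ type against an elementary kernel. This should reduce $\mathbb{E}[\rho(0)^2]$ to a Laplace-transform evaluation of $(\tau_1,X_1)$ at a single point, expected to equal $1/4$ and giving area $\pi/4$.

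As a sanity check, the same method for Brownian motion (no conditioning) gives $\frac{3\pi}{8}$, consistent with the abstract. The ordering $\pi/5<\pi/4<\pi/3$ is consistent with \eqref{eq:inclusion_chain}.
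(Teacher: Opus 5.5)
Your reduction to $\pi\,\mathbb{E}[\rho(0)^2]$ is correct, and so is your $h$-transform formula for $\mathbb{P}(\rho(0)\ge r)$. However, the argument stops where the real work begins. You never determine the joint law of $(\tau_1,X_1)$, and the value $\tfrac14$ is only ``expected.''

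The map $z\mapsto\sqrt{z-1}$ gives you harmonic measure, i.e.\ the law of $X_1$, but not the time component. Conformal invariance transports hitting places, while time is transformed by the clock $\int_0^{\tau}|f'(Z_t)|^2\,\mathrm{d}t$. So the $\lambda$-resolvent of the slit plane is not simply a transplanted half-plane Green's function, and you have not said how you would handle this.

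The missing idea is the heart of the paper (Theorem~\ref{thm:joint_Laplace}):
\begin{itemize}
\item With $f(z)=z^2+1$, the clock equals $4\int_0^{\tau_{\mathcal H}}\big((W^{(1)}_t)^2+(W^{(2)}_t)^2\big)\mathrm{d}t$ under $\mathbb{P}_{(0,1)}$.
\item Conditioning on $\tau_{\mathcal H}$ and $W^{(1)}_{\tau_{\mathcal H}}$ splits this into independent quadratic functionals of squared Bessel$(1)$ and Bessel$(3)$ bridges, whose Laplace transforms are explicit.
\item Integrating these out gives $\mathbb{E}[e^{-\lambda T_\rho-\mu X_\rho}]=\erfc\big(\sqrt{\rho(\sqrt{2\lambda}+\mu)}\big)$, equivalently $\mathbb{E}[e^{-\lambda T_\rho}\mid X_\rho]=e^{-\sqrt{2\lambda}X_\rho}$.
\end{itemize}

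Your proposed shortcut does not remove the need for this. Doing the $r$-integral with scaling and the substitutions $u=r^2\tau_1$, $v=u/(1-u)$ gives
\[
\mathbb{E}[\rho(0)^2]=\mathbb{E}\Big[\frac{1}{\tau_1}\int_0^\infty\frac{e^{-vX_1^2/(2\tau_1)}}{1+v}\,\mathrm{d}v\Big].
\]
This is not a Laplace transform of $(\tau_1,X_1)$ at a single point; it is a one-parameter family of mixed moments, so it genuinely requires the joint law.

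Once you have the conditional structure, your route does close. Given $X_1=x$, we have $\tau_1\stackrel{d}{=}x^2/Z^2$, so the expectation factors as
\[
\mathbb{E}[X_1^{-2}]\int_0^\infty\frac{\mathbb{E}[Z^2e^{-vZ^2/2}]}{1+v}\,\mathrm{d}v=\frac38\int_0^\infty(1+v)^{-5/2}\,\mathrm{d}v=\frac38\cdot\frac23=\frac14,
\]
using $X_1\stackrel{d}{=}1+C^2$ with $C$ standard Cauchy.

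By comparison, the paper uses the space--time inversion \eqref{eq:space_time} and Girsanov to get $\mathbb{P}(R^{\mathrm{BB}}\ge\rho)=\mathbb{E}[e^{-\rho X_\rho-\rho^2T_\rho/2}]=\erfc(\sqrt2\,\rho)$. That is a single Laplace evaluation per $\rho$ and yields the full half-normal law. Your forward $h$-transform decomposition is a legitimate alternative for the second moment, but only after the key result on $(T_1,X_1)$ is supplied.
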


The outer terms in this remarkable sequence of expected hull areas come from \cite{Garban_Ferreras} and \cite[Equation (144)]{Majumdar}, respectively, while the middle identity is new and is proven in Section \ref{sec:areas}. One wonders if the harmonic progression is merely coincidental.

Our next result concerns the expected areas of the three planar Brownian motion hulls. While the expected area of the convex hull was computed more than 40 years ago in \cite{El_Bachir}, it seems that the expected area of the topological hull is still unknown; see Open Question \ref{Q:top_area} for more details. However, our exact formula for the expected area of the star hull does provide an improved upper bound; see Section \ref{sec:areas} for the proof.

\begin{theorem}\label{thm:BM_star_area}
Consider the planar Brownian motion $(\boldsymbol{W}_t:0\leq t\leq 1)$ starting at $\boldsymbol{0}$, and let the corresponding topological hull $\mathcal{T}^{\mathrm{BM}}$, star hull $\mathcal{S}^{\mathrm{BM}}$, and convex hull $\mathcal{C}^{\mathrm{BM}}$ be defined as in \eqref{eq:BM_hulls} and Section \ref{sec:three_hulls}. Then we have the expected areas
\[
\mathbb{E}\big[\mathrm{area}(\mathcal{S}^{\mathrm{BM}})\big] = \frac{3\pi}{8},
  \quad
  \mathbb{E}\big[\mathrm{area}(\mathcal{C}^{\mathrm{BM}})\big] = \frac{\pi}{2}.
\]
Consequently, the expected area of the topological hull has the upper bound
\[
\mathbb{E}\big[\mathrm{area}(\mathcal{T}^{\mathrm{BM}})\big]\leq \frac{3\pi}{8}.
\]
\end{theorem}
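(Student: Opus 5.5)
Since $\mathcal{T}^{\mathrm{BM}}\subset\mathcal{S}^{\mathrm{BM}}$ by the inclusion chain \eqref{eq:inclusion_chain} (Lemma \ref{lem:inclusion_chain}), the upper bound on $\mathbb{E}[\mathrm{area}(\mathcal{T}^{\mathrm{BM}})]$ is immediate once the star hull value is known. The convex hull value $\pi/2$ is the classical result of \cite{El_Bachir}, which I would simply cite. So the real work is proving $\mathbb{E}[\mathrm{area}(\mathcal{S}^{\mathrm{BM}})]=3\pi/8$.

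\emph{Reduction to a radial function.} Write the star hull in polar form. Its radial function is
\[
\rho(\theta)=\sup\{|\boldsymbol{W}_t| : t\in[0,1],\ \boldsymbol{W}_t\in \text{ray at angle }\theta\},
\]
with $\rho(\theta)=0$ if the ray is never hit. Then $\mathrm{area}(\mathcal{S}^{\mathrm{BM}})=\frac12\int_0^{2\pi}\rho(\theta)^2\,\mathrm{d}\theta$; some care is needed with measurability and the closure, but only up to null sets. By Fubini and rotational invariance of $\boldsymbol{W}$,
\[
\mathbb{E}[\mathrm{area}(\mathcal{S}^{\mathrm{BM}})]=\pi\,\mathbb{E}[\rho(0)^2].
\]
Now $\rho(0)>r$ exactly when the path hits the half-line $L_r=[r,\infty)\times\{0\}$ before time $1$. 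Let $T_r$ be the first hitting time of $L_r$. Brownian scaling gives $T_r\overset{d}{=}r^2T_1$, so
\[
\mathbb{E}[\rho(0)^2]=\int_0^\infty 2r\,\mathbb{P}(r^2T_1\le 1)\,\mathrm{d}r=\mathbb{E}[1/T_1].
\]
Hence the claim is equivalent to $\mathbb{E}[1/T_1]=3/8$.

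\emph{Computing $\mathbb{E}[1/T_1]$.} I would use $\mathbb{E}[1/T_1]=\int_0^\infty \mathbb{E}[e^{-\lambda T_1}]\,\mathrm{d}\lambda$. Take polar coordinates $(r,\varphi)$ centred at the tip $(1,0)$. Then $T_1$ is the exit time of the slit plane, a wedge of opening $2\pi$, started at $r=1$, $\varphi=\pi$. The function $u(r,\varphi)=\mathbb{E}_{(r,\varphi)}[e^{-\lambda T}]$ solves $(\tfrac12\Delta-\lambda)u=0$ in the wedge with $u=1$ on the slit. Expanding $1-u$ in the eigenfunctions $\sin(n\varphi/2)$, $n$ odd, gives a series whose radial parts are Bessel-type functions of index $n/2$ in $\sqrt{2\lambda}\,r$.

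A cleaner route is probably the known wedge survival formula
\[
\mathbb{P}(T_1>t)=\sum_{n\ \mathrm{odd}}\frac{4}{n\pi}\sin\!\big(\tfrac{n\pi}{2}\big)\,h_{n/2}\!\big(1/(4t)\big),
\]
where $h_\nu(x)=\sqrt{\pi x}\,e^{-x}\big[I_{(\nu-1)/2}(x)+I_{(\nu+1)/2}(x)\big]/2$. One then computes
\[
\mathbb{E}[1/T_1]=\int_0^\infty \mathbb{P}(1/T_1>s)\,\mathrm{d}s
\]
term by term and sums the series. Better still, I would first derive the paper's simple Laplace transform of the joint law of $(T_1,\boldsymbol{W}_{T_1})$, which should make $\int_0^\infty\mathbb{E}[e^{-\lambda T_1}]\,\mathrm{d}\lambda$ elementary. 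I expect this step to be the main obstacle: the individual Bessel series terms do not have finite $\int_0^\infty$ separately, so the summation and integration must be justified or reorganised. I would try the conformal map $z\mapsto\sqrt{z-1}$, which sends the slit plane to a half-plane, to get a closed form before integrating.

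For comparison, the bridge analogue (Theorem \ref{thm:bridge_areas}) would go the same way with $\mathbb{P}(T_r\le1)$ replaced by
\[
\mathbb{E}\big[\mathbf 1_{\{T_r\le 1\}}\,p_{1-T_r}(\boldsymbol{W}_{T_r},\boldsymbol 0)\big]/p_1(\boldsymbol 0,\boldsymbol 0)
\]
via the strong Markov property. That version genuinely needs the joint law of the hitting time and place, and this further motivates deriving that joint Laplace transform first.
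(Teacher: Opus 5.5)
Your reduction is correct and is the paper's own. Lemma \ref{lem:mean_area_formula} plus scaling gives $\mathbb{E}[\mathrm{area}(\mathcal{S}^{\mathrm{BM}})]=\pi\,\mathbb{E}[T_1^{-1}]$, and the convex-hull citation and the inclusion-chain bound match the paper. The gap is that the proposal stops where the real work begins. You never establish $\mathbb{E}[T_1^{-1}]=3/8$; you only list possible routes and flag this as the main obstacle. The wedge-series route is not carried out, and as you note the termwise integrals diverge. Incidentally, your $h_\nu$ is off by a factor $\sqrt{2}$: as written it gives $\mathbb{P}(T_1>t)\to 1/\sqrt{2}$ as $t\to 0$. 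Naming the conformal map $z\mapsto\sqrt{z-1}$ is the right instinct. On its own, though, it only identifies the hitting place ($X_1\stackrel{d}{=}C^2+1$ with $C$ standard Cauchy), which tells you nothing about $T_1$.

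What is missing is the time-change part of conformal invariance, together with an explicit evaluation of the resulting quadratic functional: $T_1\stackrel{d}{=}4\int_0^{\tau_{\mathcal H}}|\boldsymbol{W}_t|^2\,\mathrm{d}t$ under $\mathbb{P}_{(0,1)}$, as in \eqref{eq:half_plane}. The paper conditions on $\tau_{\mathcal H}$ and $W^{(1)}_{\tau_{\mathcal H}}$, which makes the squared coordinates independent squared Bessel bridges of dimensions $3$ and $1$ with explicit Laplace transforms \eqref{eq:bridge_formula}. Integrating against \eqref{eq:mixing_density} with the substitution $u=b\coth(bt)$ then gives $\mathbb{E}[e^{-\lambda T_1}]=\erfc\big((2\lambda)^{1/4}\big)$, which is Theorem \ref{thm:joint_Laplace} with $\rho=1$, $\mu=0$. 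With that in hand, your own formula $\mathbb{E}[T_1^{-1}]=\int_0^\infty\mathbb{E}[e^{-\lambda T_1}]\,\mathrm{d}\lambda$ closes the argument quickly. Substituting $\lambda=u^4/2$ and integrating by parts,
\[
\int_0^\infty \erfc\big((2\lambda)^{1/4}\big)\,\mathrm{d}\lambda=2\int_0^\infty u^3\erfc(u)\,\mathrm{d}u=\frac{1}{\sqrt{\pi}}\int_0^\infty u^4e^{-u^2}\,\mathrm{d}u=\frac{3}{8}.
\]
This is the $p=1$ case of Lemma \ref{lem:erfc_moments} done by hand; the paper instead obtains all fractional moments of $R^{\mathrm{BM}}$ at once via the half-normal representation and analytic continuation. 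Without the Laplace transform of $T_1$, or some equivalent explicit description of its law, your argument never actually reaches the value $3\pi/8$.
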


\subsection*{First hitting time and place of a ray}

The main results of this paper are based on a careful study of the first hitting time and place of a horizontal ray by planar Brownian motion. For $\rho\geq 0$, define 
\begin{equation}\label{eq:ray}
\mathcal{R}_\rho:=[\rho,\infty)\times\{0\}.
\end{equation}
The ray $\mathcal{R}_\rho$ is the part of the horizontal axis to the right of and including the point $(\rho,0)$. The first hitting time of $\mathcal{R}_\rho$ is defined by
\begin{equation}\label{eq:first_time}
T_\rho:=\inf\{t \geq 0 : \boldsymbol{W}_t \in \mathcal{R}_\rho\}.
\end{equation}
It is well known that planar Brownian motion hits sets with positive logarithmic capacity, so $T_\rho$ is finite almost surely. Recall that $\boldsymbol{W}_t=(W_t^{(1)},W_t^{(2)})$, where for each $i=1,2$, the $i$th coordinate process $(W_t^{(i)}:t\geq 0)$ is a one-dimensional Brownian motion starting at $0$ that is independent of the other coordinate. With this representation, we can define the first hitting place of $\mathcal{R}_\rho$ by
\begin{equation}\label{eq:first_place}
X_\rho:=W_{T_\rho}^{(1)}.
\end{equation}

Our next result is a remarkably simple expression for the joint Laplace transform of $(T_\rho,X_\rho)$; see Section \ref{sec:joint_Laplace} for the proof.

\begin{theorem}\label{thm:joint_Laplace}
Consider the first hitting time and place $(T_\rho,X_\rho)$ of the horizontal ray $\mathcal{R}_\rho$ by the planar Brownian motion $(\boldsymbol{W}_t:t\geq 0)$ starting at $\boldsymbol{0}$, as defined by \eqref{eq:ray}, \eqref{eq:first_time}, and \eqref{eq:first_place}. Then for every $\rho>0$ and $\lambda,\mu \geq 0$, we have
\[
\mathbb{E}[\exp(-\lambda T_\rho - \mu X_\rho)]
=\erfc\mleft(\sqrt{\rho\mleft(\sqrt{2\lambda}+\mu \mright)}\mright),
\]
where $\erfc(z):=\frac{2}{\sqrt{\pi}}\int_z^\infty e^{-u^2}\D{u}$ is the complementary error function.
\end{theorem}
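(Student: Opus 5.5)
Here is the plan. We compute $u(\boldsymbol{z}):=\mathbb{E}_{\boldsymbol{z}}[\exp(-\lambda T_\rho-\mu X_\rho)]$ as the solution of a boundary value problem on the slit plane $\mathbb{R}^2\setminus\mathcal{R}_\rho$. The function $u$ satisfies $\tfrac12\Delta u=\lambda u$ off the slit, takes the boundary values $u(x,0)=e^{-\mu x}$ for $x\ge\rho$, and is bounded. By translation invariance it suffices to shift so that the slit is $[0,\infty)\times\{0\}$ and to evaluate at the point $(-\rho,0)$.

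The natural tool is the conformal map $\zeta=\sqrt{z}$ (with $z=x+iy$ the complex coordinate), which sends the slit plane onto the right half-plane. The Helmholtz operator is not conformally invariant, however, so instead we use parabolic coordinates $x=\sigma^2-\tau^2$, $y=2\sigma\tau$. In these coordinates the slit is $\{\tau=0\}$ and the equation separates.

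First consider $\mu=0$. We guess the solution $u=\erfc(\sqrt{2\lambda}\,\tau')$ for a suitable parabolic coordinate $\tau'$. Indeed, writing $\sqrt{z}=a+ib$ with the branch cut along the slit, the function $f(b)$ with $b=\operatorname{Im}\sqrt{z}$ satisfies $\Delta f(b)=f''(b)|\nabla b|^2=f''(b)/(4|z|)$. This is not constant, so a function of $b$ alone is insufficient.

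The better route combines $\lambda$ and $\mu$. Note that $e^{-\mu x-\sqrt{\mu^2+2\lambda}\cdots}$ fails, and that the answer depends only on $\sqrt{2\lambda}+\mu$, which suggests the ansatz
\[
u=e^{-\kappa x}\,g\text{ with }\kappa\text{ chosen so that } \tfrac12\Delta u=\lambda u .
\]
Concretely, try $u(z)=e^{-\mu x}\,h(z)$. Then $\tfrac12\Delta h-\mu\partial_x h=(\lambda-\mu^2/2)h$, and $h=1$ on the slit.

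Now set $\nu:=\sqrt{2\lambda}+\mu$ and look for $h=e^{\alpha x}\Phi(\cdot)$, where $\Phi$ is a function of the parabolic coordinate $s:=\operatorname{Re}\sqrt{z}+|\operatorname{Im}\sqrt{z}|$ (or an analogous coordinate). The candidate is
\[
u(x,y)=e^{\sqrt{2\lambda}\,x}\erfc\big(\sqrt{\nu}\,(\sqrt{r+x}\,\text{-type parabolic coordinate})\big)\cdots
\]
The cleanest way to verify it is to use the classical fact that $w(x,y)=e^{-kx}\erfc(\sqrt{2k}\,\operatorname{Im}\sqrt{\,\cdot\,})$-type functions (Sommerfeld half-plane diffraction solutions) solve the Helmholtz equation in the slit plane. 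We plan to adapt Sommerfeld's solution with the imaginary wave number $i\sqrt{2\lambda}$ and an incident plane wave $e^{-\mu x}$.

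The steps are as follows.
\begin{enumerate}
\item Write the Sommerfeld-type candidate
\[
u(x,y)=e^{-\mu x}\,\erfc\big(\sqrt{\nu}\,q(x,y)\big),\qquad q:=\sqrt{\sqrt{x^2+y^2}-x}\,\big/\!\sqrt{2}\cdot\text{const}.
\]
Adjust this candidate (possibly adding the mirror term $e^{\sqrt{2\lambda}\,x}$ with the complementary erfc) until the PDE holds.
\item Check the boundary values on the slit: $q=0$ there, so $u=e^{-\mu x}$.
\item Check boundedness and that $u\to0$ at infinity away from the slit when $\lambda>0$.
\item Invoke uniqueness: $e^{-\lambda t}u(\boldsymbol{W}_t)$ is a bounded martingale up to $T_\rho$, which gives the probabilistic representation.
\item Evaluate at $(-\rho,0)$. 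There $q=\sqrt{\rho}$ up to the constant, and the exponential prefactors should cancel, leaving $\erfc(\sqrt{\rho\nu})$.
\end{enumerate}
Step 1 is the main obstacle: identifying the exact closed form solving the modified Helmholtz equation. A direct derivative computation in parabolic coordinates $(\sigma,\tau)$ will settle it, since there the Laplacian becomes $(\sigma^2+\tau^2)^{-1}\tfrac14(\partial_\sigma^2+\partial_\tau^2)$, and $\erfc$ of the linear combination $\sqrt{\nu}(\cdot)$ satisfies the needed ODE $\Phi''=-2s\Phi'$.

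As a fallback, we take a two-step approach. First compute the joint law of $T_\rho$ with $\mu=0$ via the known hitting density of the slit: the distribution of $\sqrt{\cdot}$-mapped Brownian motion, with time change, hitting a line. Second, obtain the $X_\rho$ dependence by the strong Markov property and scaling, $(T_\rho,X_\rho)\overset{d}{=}(\rho^2T_1,\rho X_1)$. The functional equation in $\rho$ then forces dependence on the combination $\rho\nu$.
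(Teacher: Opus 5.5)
There is a genuine gap: the proposal never produces a solution of the boundary value problem, and the one explicit candidate you write down does not work. In parabolic coordinates $x=\sigma^2-\tau^2$, $y=2\sigma\tau$, with $k:=\sqrt{2\lambda}$, the equation $\tfrac12\Delta u=\lambda u$ becomes $u_{\sigma\sigma}+u_{\tau\tau}=4k^2(\sigma^2+\tau^2)u$. For $u=e^{-\mu x}\Phi(\tau)=e^{-\mu\sigma^2}e^{\mu\tau^2}\Phi(\tau)$ one gets $u_{\sigma\sigma}=(4\mu^2\sigma^2-2\mu)u$, so matching the $\sigma^2$-terms forces $\mu=k$. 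Then $\Phi$ must satisfy $\Phi''+4k\tau\Phi'=0$, i.e.\ $\Phi=\erfc(\sqrt{2k}\,\tau)$, whose scale is $\sqrt{2k}$, not $\sqrt{\nu}$. Thus $e^{-\mu x}\erfc(c\sqrt{\nu}\,|\operatorname{Im}\sqrt{z}|)$ solves the equation only on the diagonal $\mu=\sqrt{2\lambda}$. ``Adjust the candidate until the PDE holds'' is exactly the missing content, and a mirror term $e^{\sqrt{2\lambda}x}\cdots$ does not supply it. Your fallback does not close the gap either. Scaling only shows that the transform is a function of the two quantities $\rho\sqrt{2\lambda}$ and $\rho\mu$. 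That it depends only on their sum is equivalent to the conditional structure in Theorem~\ref{thm:conditional_structure}, which neither scaling nor the strong Markov property provides. Nor can the marginal laws of $T_\rho$ and $X_\rho$ determine the joint law.

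The Sommerfeld idea does work if you superpose two \emph{rotated} copies of the diagonal solution. Translate the slit to $[0,\infty)\times\{0\}$ and use polar coordinates $(r,\theta)$ with $\theta\in(0,2\pi)$. Assume first $\lambda>0$ and $0\le\mu\le k$, pick $\alpha\in[0,\pi/2]$ with $k\cos\alpha=\mu$, and set
\[
v(r,\phi):=e^{-kr\cos\phi}\erfc\big(\sqrt{2kr}\,\sin(\phi/2)\big),\qquad u(r,\theta):=\tfrac12\big[v(r,\theta-\alpha)+v(r,\theta+\alpha)\big].
\]
Each term solves the equation. Indeed, $v(r,\theta-\beta)=e^{-k\operatorname{Re}\zeta^2}\erfc(\sqrt{2k}\operatorname{Im}\zeta)$ with $\zeta=\sqrt{r}\,e^{i(\theta-\beta)/2}$ holomorphic on the slit plane, and $z=e^{i\beta}\zeta^2$ has $|dz/d\zeta|^2=4|\zeta|^2$, so the diagonal computation above applies. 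On both edges of the slit, $u=\tfrac12e^{-\mu r}[\erfc(s)+\erfc(-s)]=e^{-\mu r}$ with $s=\sqrt{2kr}\sin(\alpha/2)$, and $u\to1$ at the tip. Moreover $u$ is bounded. Where the sine is nonnegative, $\erfc(s)\le e^{-s^2}$ for $s\ge0$ gives $v\le e^{-kr}$. Where it is negative, $\cos\phi\ge\cos\alpha\ge0$, so $v\le 2$. Your martingale step then yields
\[
\mathbb{E}[e^{-\lambda T_\rho-\mu X_\rho}]=e^{-\mu\rho}u(\rho,\pi)=\erfc\big(\sqrt{2k\rho}\cos(\alpha/2)\big)=\erfc\big(\sqrt{\rho(k+\mu)}\big),
\]
since $\cos^2(\alpha/2)=(k+\mu)/(2k)$. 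Extend to $\mu>k$ by analyticity in $\mu$ on $\operatorname{Re}\mu>0$, and to $\lambda=0$ by letting $\lambda\downarrow0$. Note that $\sqrt{2\lambda}+\mu$ enters only at the evaluation point $\theta=\pi$, not in the structure of the solution.

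Completed this way, your route is a genuinely different proof from the paper's. The paper maps by $z\mapsto z^2+1$, time-changes, and disintegrates over Bessel$(1)$ and Bessel$(3)$ bridges using \eqref{eq:bridge_formula}. The PDE route avoids the bridge machinery and gives the transform from every starting point.
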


It is worth mentioning that a trivariate Laplace transform which incorporates the local time at zero of the vertical coordinate in addition to $T_\rho$ and $X_\rho$ was obtained by Isozaki using Wiener--Hopf factorization in \cite{Isozaki}. However, even when specialized to the bivariate case treated in our Theorem \ref{thm:joint_Laplace}, the expression obtained by Isozaki is rather complicated and involves the integral of a complex-valued auxiliary function which is itself defined as an integral. In particular, it seems difficult to recover from \cite[Theorem 1.1]{Isozaki} the following conditional structure of $(T_\rho,X_\rho)$ that can be readily deduced from Theorem \ref{thm:joint_Laplace}; see Section \ref{sec:conditional_structure} for the proof. 

\begin{theorem}\label{thm:conditional_structure}
Let $T_\rho$ and $X_\rho$ be as in Theorem \ref{thm:joint_Laplace}. Then for any $\rho>0$ and $x\geq\rho$, conditionally on $X_\rho=x$, the random variable $T_\rho$ has the law of the first passage time to the level $x$ of one-dimensional Brownian motion starting at $0$. Equivalently, for every $\lambda \geq 0$, we have 
\[
\mathbb{E}\big[e^{-\lambda T_\rho} \big|X_\rho\big] = e^{-\sqrt{2\lambda}X_\rho}~\text{almost surely}.
\]
\end{theorem}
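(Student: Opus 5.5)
The plan is to read the conditional structure directly off the joint Laplace transform of Theorem \ref{thm:joint_Laplace}. The key observation is that the right-hand side depends on $(\lambda,\mu)$ only through $\sqrt{2\lambda}+\mu$. Setting $\lambda=0$, we get $\mathbb{E}[e^{-\mu X_\rho}]=\erfc(\sqrt{\rho\mu})$ for all $\mu\ge 0$, which determines the law of $X_\rho$ (supported on $[\rho,\infty)$). Writing $\phi(s):=\mathbb{E}[e^{-sX_\rho}]$, the theorem then reads
\[
\mathbb{E}\big[e^{-\lambda T_\rho-\mu X_\rho}\big]=\phi\big(\sqrt{2\lambda}+\mu\big)=\mathbb{E}\big[e^{-\sqrt{2\lambda}X_\rho}e^{-\mu X_\rho}\big].
\]

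Fix $\lambda\ge0$ and set $g(X_\rho):=\mathbb{E}[e^{-\lambda T_\rho}\mid X_\rho]$, a version of the conditional expectation with values in $[0,1]$. By the tower property, the identity above becomes
\[
\mathbb{E}\big[g(X_\rho)e^{-\mu X_\rho}\big]=\mathbb{E}\big[e^{-\sqrt{2\lambda}X_\rho}e^{-\mu X_\rho}\big]\quad\text{for all }\mu\ge0 .
\]
Thus the two finite measures $g(x)\,P_{X_\rho}(\D{x})$ and $e^{-\sqrt{2\lambda}x}\,P_{X_\rho}(\D{x})$ on $[0,\infty)$ have the same Laplace transform on $[0,\infty)$. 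By uniqueness of Laplace transforms of finite measures on the half-line, the two measures coincide. Hence $g(x)=e^{-\sqrt{2\lambda}x}$ for $P_{X_\rho}$-a.e.\ $x$, which is exactly the almost sure identity in the statement.

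To obtain the distributional formulation, recall that the first passage time $\tau_x$ of one-dimensional Brownian motion to level $x>0$ satisfies $\mathbb{E}[e^{-\lambda\tau_x}]=e^{-\sqrt{2\lambda}x}$. A regular conditional distribution of $T_\rho$ given $X_\rho=x$ therefore exists, since $T_\rho$ is real-valued. For $P_{X_\rho}$-a.e.\ $x$, its Laplace transform agrees with that of $\tau_x$ for every $\lambda$ in a countable dense subset of $[0,\infty)$, where we discard a null set for each $\lambda$. By monotonicity and continuity in $\lambda$ it agrees for all $\lambda\ge0$, so the conditional laws coincide for a.e.\ $x\ge\rho$. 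One may choose the version $\tau_x$ for every $x$, which gives the statement for all $x\ge\rho$ in the sense of regular conditional laws.

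The main (mild) obstacle is exactly this handling of null sets. We need the uniqueness theorem for Laplace transforms of finite (not probability) measures, and we need to pass from "for each $\lambda$, a.s." to a statement about conditional laws. Everything else is immediate once the dependence on $\sqrt{2\lambda}+\mu$ is noticed.
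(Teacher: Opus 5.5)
Your proposal is correct and follows the paper's proof essentially step for step: you set $\lambda=0$ to identify $\erfc(\sqrt{\rho s})$ as the Laplace transform of $X_\rho$, rewrite the joint transform as $\mathbb{E}[e^{-\mu X_\rho}e^{-\sqrt{2\lambda}X_\rho}]$, compare it with $\mathbb{E}[e^{-\mu X_\rho}g_\lambda(X_\rho)]$ via uniqueness of Laplace transforms of finite measures on $[0,\infty)$, and recognize $e^{-\sqrt{2\lambda}x}$ as the first-passage transform. Your extra paragraph, which upgrades the almost sure identity for each fixed $\lambda$ to a statement about regular conditional laws using a countable dense set of $\lambda$ and continuity, is a valid refinement that the paper leaves implicit.
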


While our joint Laplace transform formula is new, this conditional structure, as well as the corresponding joint and marginal density formulas for $(T_\rho,X_\rho)$ from Theorem \ref{thm:densities} below, first appeared in the as-yet-unpublished preprint \cite{Wroclaw}, where they were derived using markedly different methods. See Section \ref{sec:densities} for our derivation of the densities. In order to state these density formulas, we first need to recall the definition of the modified Bessel function of the second kind. This function, also known as the Bessel \emph{K} function of order $\nu\in\mathbb{C}$, has the representation
\[
K_\nu(z)=\frac{z^\nu}{2^{\nu+1}}\int_0^\infty t^{-\nu-1}e^{-t-\frac{z^2}{4t}}\D{t};
\]
see \cite[Chapter 10]{DLMF}. For $\nu>0$, this reference reports real argument asymptotics of
\begin{equation}\label{eq:BesselK_asymptotic}
K_\nu(x)\sim 2^{\nu-1}\Gamma(\nu)\, x^{-\nu}~\text{ as }~x\searrow 0,~~~~K_\nu(x)\sim \sqrt{\frac{\pi}{2x}}e^{-x}~\text{ as }~x\to\infty.
\end{equation}

\begin{theorem}\label{thm:densities}
For $\rho>0$, let $T_\rho$ and $X_\rho$ be as in Theorem \ref{thm:joint_Laplace}. Then we have that:
\begin{enumerate}[label=(\roman*)]
\item The joint density of $(T_\rho,X_\rho)$ is
\begin{equation}\label{eq:joint_density}
\mathbb{P}(T_\rho\in\D{t},\,X_\rho\in \D{x}) = \frac{\sqrt{\rho}\,e^{-\frac{x^2}{2t}}}{\sqrt{2\pi^3 t^3 (x-\rho) }}\D{t}\D{x}, ~~ t>0,\, x>\rho;
\end{equation}

\item The marginal density of $T_\rho$ is
\begin{equation}\label{eq:T_density}
\mathbb{P}(T_\rho\in\D{t}) = \frac{\rho\,e^{-\frac{\rho^2}{4t}}}{2\sqrt{\pi^3 t^3}}K_\frac{1}{4}\mleft(\frac{\rho^2}{4t}\mright)\D{t}, ~~ t>0;
\end{equation}

\item The marginal density of $X_\rho$ is
\begin{equation}\label{eq:X_density}
\mathbb{P}(X_\rho\in \D{x}) = \frac{\sqrt{\rho}}{\pi x\sqrt{x-\rho}}\D{x}, ~~ x>\rho.
\end{equation}
\end{enumerate}
\end{theorem}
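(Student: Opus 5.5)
Everything can be derived from Theorems \ref{thm:joint_Laplace} and \ref{thm:conditional_structure}; I would start with part (iii). Setting $\lambda=0$ in Theorem \ref{thm:joint_Laplace} gives $\mathbb{E}[e^{-\mu X_\rho}]=\erfc(\sqrt{\rho\mu})$. I would then check directly that the proposed density \eqref{eq:X_density} has this Laplace transform. Substitute $x=\rho+s$ and use the elementary identity
\[
\frac{1}{x}=\int_0^\infty e^{-ux}\D{u}.
\]
This turns $\int_\rho^\infty e^{-\mu x}\frac{\sqrt\rho}{\pi x\sqrt{x-\rho}}\D{x}$ into
\[
\frac{\sqrt\rho}{\pi}\int_\mu^\infty e^{-v\rho}\sqrt{\pi/v}\,\D{v}.
\]
The substitution $v\rho=w^2$ then reduces this to $\frac{2}{\sqrt\pi}\int_{\sqrt{\rho\mu}}^\infty e^{-w^2}\D{w}=\erfc(\sqrt{\rho\mu})$. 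Uniqueness of Laplace transforms for nonnegative random variables identifies the law of $X_\rho$. Normalization ($\mu=0$ gives $1$) and the support $x>\rho$ come out automatically.

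For part (i), I would combine (iii) with Theorem \ref{thm:conditional_structure}. Conditionally on $X_\rho=x$, the variable $T_\rho$ is the first passage time of a one-dimensional Brownian motion to level $x$. That passage time has the classical Lévy density $\frac{x}{\sqrt{2\pi t^3}}e^{-x^2/(2t)}$. Multiplying this by \eqref{eq:X_density} cancels the factor $x$. The constants combine as $\frac{1}{\pi}\cdot\frac{1}{\sqrt{2\pi}}=\frac{1}{\sqrt{2\pi^3}}$, which gives exactly \eqref{eq:joint_density}. I do not need a separate measure-theoretic argument: the joint law is the mixture of the conditional kernel against the marginal, and Theorem \ref{thm:conditional_structure} identifies that kernel through its Laplace transform, which determines it uniquely.

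For part (ii), I would integrate \eqref{eq:joint_density} over $x>\rho$, which requires evaluating
\[
\int_\rho^\infty \frac{e^{-x^2/(2t)}}{\sqrt{x-\rho}}\D{x}.
\]
This is the main computational step. First substitute $x=\rho\cosh^2(\theta/2)$, or equivalently $x=\frac{\rho}{2}(1+\cosh\theta)$. Then $x^2=\frac{\rho^2}{4}(1+\cosh\theta)^2$, which expands to $\frac{\rho^2}{4}\big(\tfrac32+2\cosh\theta+\tfrac12\cosh 2\theta\big)$. This does not immediately give a single-cosh integral, so I would instead try $x=\rho(1+y)$. The integral becomes
\[
\sqrt{\rho}\int_0^\infty y^{-1/2}e^{-\frac{\rho^2}{2t}(1+y)^2}\D{y}.
\]
Next set $z=1+y$ and write $z^2=\frac12+\frac12\cosh\phi$ after a shift, aiming for the integral representation $K_\nu(a)=\int_0^\infty e^{-a\cosh\phi}\cosh(\nu\phi)\D{\phi}$ with $a=\frac{\rho^2}{4t}$. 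The exponential prefactor $e^{-\rho^2/(4t)}$ in \eqref{eq:T_density} strongly suggests this shape. If the direct substitution becomes messy, I would fall back on the standard formula for $\int_0^\infty y^{\nu-1}e^{-\beta y^2-\gamma y}\D{y}$, which is a parabolic cylinder function $D_{-1/2}$. With $\nu=\frac12$ and $\gamma^2/(8\beta)=\frac{\rho^2}{4t}$, this function reduces to $K_{1/4}$ via $D_{-1/2}(z)=\sqrt{z/(2\pi)}\,K_{1/4}(z^2/4)$. Collecting constants should then give \eqref{eq:T_density}.

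As a consistency check, I would verify that the resulting density is consistent with $\lambda$-Laplace transform $\erfc(\sqrt{\rho\sqrt{2\lambda}})$ using the small- and large-argument asymptotics \eqref{eq:BesselK_asymptotic}, at least for the tail behaviour.
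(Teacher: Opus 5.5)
Your proposal is correct, but it departs from the paper in parts (iii) and (ii). Part (i) is the same as the paper's argument: the marginal of $X_\rho$ times the L\'evy first-passage density, justified by Theorem \ref{thm:conditional_structure}.

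For (iii), the paper does not invert a Laplace transform. It returns to the conformal representation \eqref{eq:half_plane} and uses that the exit point of the upper half-plane started from $(0,1)$ is standard Cauchy. Hence $X_\rho\stackrel{d}{=}\rho(C^2+1)$, and the paper differentiates the distribution function $\frac{2}{\pi}\arctan\sqrt{x/\rho-1}$. That derives the density without knowing it in advance. Your verification needs the candidate density, but it makes the whole theorem a corollary of Theorems \ref{thm:joint_Laplace} and \ref{thm:conditional_structure} alone. The computation does give exactly $\erfc(\sqrt{\rho\mu})$.

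For (ii), the paper substitutes $x=u^2+\rho$ to get $\int_0^\infty e^{-u^4/(2t)-\rho u^2/t}\D{u}$ and then quotes the table identity \eqref{eq:GR_identity}. Your parabolic-cylinder fallback also works, with the right constants. Take $\beta=\rho^2/(2t)$, $\gamma=\rho^2/t$, $\nu=\frac12$, and use $D_{-1/2}(z)=\sqrt{z/(2\pi)}\,K_{1/4}(z^2/4)$ at $z=\rho/\sqrt{t}$.

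Your direct route closes more cleanly than you expected, provided you substitute $x=\rho\cosh(\phi/2)$ rather than $x=\rho\cosh^2(\theta/2)$. This is your $z^2=\frac12(1+\cosh\phi)$, and no shift is needed. Then $\frac{x^2}{2t}=\frac{\rho^2}{4t}(1+\cosh\phi)$ and $(x-\rho)^{-1/2}\D{x}=\sqrt{\rho/2}\,\cosh(\phi/4)\D{\phi}$. The integral therefore equals $\sqrt{\rho/2}\,e^{-\rho^2/(4t)}K_{1/4}(\rho^2/(4t))$ directly from the textbook representation $K_\nu(a)=\int_0^\infty e^{-a\cosh\phi}\cosh(\nu\phi)\D{\phi}$, with no table entry needed. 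Multiplying by $\sqrt{\rho}/\sqrt{2\pi^3t^3}$ gives \eqref{eq:T_density}.

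The closing asymptotic consistency check is unnecessary.
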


\begin{remark}\label{rem:one_fourth}
The small argument asymptotic from \eqref{eq:BesselK_asymptotic} can be used with the Legendre duplication formula to recover from \eqref{eq:T_density} the right-tail asymptotic of Isozaki
\begin{equation}
\mathbb{P}(T_\rho>t)\sim\frac{2^{5/4}\sqrt{\rho}}{\sqrt{\pi}\,\Gamma(3/4)}t^{-1/4}~\text{ as }~t\to\infty;\label{eq:one_fourth}
\end{equation}
see \cite[Corollary 1.1]{Isozaki}. The $t^{-1/4}$ decay rate in \eqref{eq:one_fourth} can be surmised by recasting the hitting time of a ray as the exit time of a degenerate wedge with total opening angle $2\pi$ and then appealing to Spitzer’s well-known integrability condition for the exit time of wedges \cite[Theorem 2]{Spitzer}. The same decay rate in the simple random walk setting was first observed by Lawler \cite[Equation (2.35)]{intersections}; see also \cite{slit_plane, Fukai}.
\end{remark}

\subsection*{Radial distance of the trace}

Define the \emph{radial distance} of the traces of planar Brownian motion and bridge by 
\begin{equation}\label{eq:radial_distance}
\begin{split}
R^{\mathrm{BM}} &:= \sup\mleft\{\rho \geq 0 : (\rho,0) \in \boldsymbol{W}_{[0,1]}\mright\},\\
R^{\mathrm{BB}} &:= \sup\mleft\{\rho \geq 0 : (\rho,0) \in \boldsymbol{B}_{[0,1]}\mright\}.
\end{split}
\end{equation}
In other words, the radial distance is the distance from the origin to the rightmost point in the intersection of the trace with the horizontal axis. Alternatively, it is nothing but the one-point marginal of the corresponding \emph{radial function} in the $\theta=0$ direction; see Section \ref{sec:radial}. Since the paths of Brownian motion starting at $\boldsymbol{0}$ and Brownian bridge starting and ending at $\boldsymbol{0}$ both have distributions which are invariant under rotations about the origin, it follows that the radial functions of their traces are stationary random fields on $[0,2\pi)$.  

The random variable $R^{\mathrm{BM}}$ and the point which attains it, as well as more general versions of $R^{\mathrm{BM}}$ obtained by intersecting the trace with a vertically shifted horizontal line, have appeared in the study of cone and twist points of planar Brownian motion by Burdzy and Le Gall; see \cite{Burdzy} and Sections III.4 and IV.4 of \cite{Le_Gall}. However, as far as the present author knows, neither the distributions nor any moments of $R^{\mathrm{BM}}$ or $R^{\mathrm{BB}}$ have been computed in the literature. The following theorem identifies the densities and moments of both of these random variables; see Section \ref{sec:radial_distance} for the proof. This theorem, when paired with Lemma \ref{lem:mean_area_formula} below, is what allows us to compute the expected areas of the star hulls in Theorems \ref{thm:bridge_areas} and \ref{thm:BM_star_area}.

\begin{theorem}\label{thm:radial_distance}
Let the radial distances $R^{\mathrm{BM}}$ and $R^{\mathrm{BB}}$ be defined as in \eqref{eq:radial_distance}.
\begin{enumerate}[label=(\roman*)]
\item We have the equality in distribution $R^{\mathrm{BM}}\stackrel{d}{=} T_1^{-\frac{1}{2}}$, where $T_1$ denotes the first hitting time of the ray $\mathcal{R}_1$ defined by \eqref{eq:ray} and \eqref{eq:first_time}. Furthermore, $R^{\mathrm{BM}}$ has the density and fractional moments given by 
\begin{align*}
\mathbb{P}\mleft(R^{\mathrm{BM}}\in \D{\rho}\mright) &= \frac{e^{-\frac{\rho^2}{4}}}{\sqrt{\pi^3}}K_\frac{1}{4}\mleft(\frac{\rho^2}{4}\mright)\D{\rho}, ~~ \rho>0;\\
\\
\mathbb{E}\mleft[\mleft(R^{\mathrm{BM}}\mright)^p\mright]&=\frac{\Gamma\mleft(p+\frac{1}{2}\mright)}{\sqrt{2^p \pi}\,\Gamma\mleft(\frac{p}{2}+1\mright)},~~p>-\frac{1}{2}.
\end{align*}

\item We have the equality in distribution $R^{\mathrm{BB}}\stackrel{d}{=} \frac{1}{2}|Z|$, where $Z$ is a standard normal random variable. Consequently, $R^{\mathrm{BB}}$ has the half-normal distribution with density and fractional moments given by
\begin{align*}
\mathbb{P}\mleft(R^{\mathrm{BB}}\in \D{\rho}\mright)&= 2\sqrt{\frac{2}{\pi}}e^{-2\rho^2}\D{\rho}, ~~ \rho>0;\\
\\
\mathbb{E}\mleft[\mleft(R^{\mathrm{BB}}\mright)^p\mright]&=\frac{\Gamma\mleft(\frac{p}{2}+\frac{1}{2}\mright)}{\sqrt{2^p \pi}},~~p>-1.
\end{align*}
\end{enumerate}
\end{theorem}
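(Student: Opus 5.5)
For part (i), I will first reduce the statement to a hitting event. Since the trace $\boldsymbol{W}_{[0,1]}$ is compact, the supremum in \eqref{eq:radial_distance} is attained. Hence $R^{\mathrm{BM}}\geq\rho$ holds if and only if the path meets the ray $\mathcal{R}_\rho$ by time $1$, that is, $\{R^{\mathrm{BM}}\geq \rho\}=\{T_\rho\leq 1\}$ for every $\rho>0$.

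Brownian scaling $\boldsymbol{W}_{\rho^2 t}\stackrel{d}{=}\rho\boldsymbol{W}_t$ maps $\mathcal{R}_1$ onto $\mathcal{R}_\rho$, so $T_\rho\stackrel{d}{=}\rho^2T_1$. Therefore
\[
\mathbb{P}(R^{\mathrm{BM}}\geq\rho)=\mathbb{P}(\rho^2T_1\leq 1)=\mathbb{P}\big(T_1^{-1/2}\geq\rho\big),
\]
which gives $R^{\mathrm{BM}}\stackrel{d}{=}T_1^{-1/2}$.

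The density then follows from \eqref{eq:T_density} with $\rho=1$ by the change of variables $t=\rho^{-2}$, with Jacobian $2\rho^{-3}$. The factor $\rho^{3}$ coming from $t^{-3/2}$ cancels the Jacobian, and we obtain exactly $\pi^{-3/2}e^{-\rho^2/4}K_{1/4}(\rho^2/4)$.

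For the moments, I will substitute $x=\rho^2/4$ and reduce to the standard Mellin-type integral
\[
\int_0^\infty x^{\mu-1}e^{-x}K_\nu(x)\,\D{x}=\frac{\sqrt{\pi}\,\Gamma(\mu+\nu)\Gamma(\mu-\nu)}{2^{\mu}\Gamma(\mu+\frac12)},
\]
with $\nu=\tfrac14$ and $\mu=\tfrac{p}{2}+\tfrac12$. The identity can be checked by inserting the integral representation of $K_\nu$ and using Fubini. The result should then simplify via Legendre duplication to the stated formula. Convergence at $0$ requires $\mu>\nu$, i.e. $p>-\tfrac12$, which is consistent with $K_{1/4}(x)\asymp x^{-1/4}$ from \eqref{eq:BesselK_asymptotic}. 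As a cross-check for $p>0$, I can instead compute $\mathbb{E}[T_1^{-p/2}]=\Gamma(p/2)^{-1}\int_0^\infty\lambda^{p/2-1}\,\mathrm{erfc}((2\lambda)^{1/4})\,\D{\lambda}$ using Theorem \ref{thm:joint_Laplace} with $\mu=0$.

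For part (ii), the same reasoning gives $\{R^{\mathrm{BB}}\geq\rho\}=\{\boldsymbol{B}\text{ hits }\mathcal{R}_\rho\text{ before time }1\}$. I will realize the bridge as $\boldsymbol{W}$ on $[0,1]$ conditioned on $\boldsymbol{W}_1=\boldsymbol{0}$, and apply the strong Markov property at $T_\rho$, where $\boldsymbol{W}_{T_\rho}=(X_\rho,0)$. Writing $p_s(\boldsymbol{z})=(2\pi s)^{-1}e^{-|\boldsymbol{z}|^2/(2s)}$ for the planar heat kernel, this gives
\[
\mathbb{P}(R^{\mathrm{BB}}\geq\rho)=\frac{1}{p_1(\boldsymbol 0)}\int_0^1\!\!\int_\rho^\infty p_{1-t}\big((x,0)\big)\,\mathbb{P}(T_\rho\in\D{t},X_\rho\in\D{x}).
\]
Into this I will plug the joint density \eqref{eq:joint_density} from Theorem \ref{thm:densities}. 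The two Gaussian factors combine into $e^{-x^2/(2t(1-t))}$. The $t$-integral $\int_0^1 t^{-3/2}(1-t)^{-1}e^{-x^2/(2t(1-t))}\D{t}$ should be evaluated in closed form, via the substitution $s=\tfrac{t}{1-t}$ or $u=\tfrac1t-1$, which turns it into a Gaussian-type integral producing a multiple of $x^{-1}e^{-2x^2}$ (possibly with an erfc correction). What remains is a one-dimensional integral of the form $\int_\rho^\infty \sqrt{\rho}\,e^{-2x^2}\big(x\sqrt{x-\rho}\big)^{-1}\D{x}$, possibly with a similar weight.

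The main obstacle is carrying out these two integrations exactly and recognizing the result as $\mathrm{erfc}(\sqrt2\rho)=\mathbb{P}(|Z|/2\geq\rho)$. If the direct route is messy, I will differentiate in $\rho$ first, or verify the identity through its Laplace/Mellin transform in $\rho$. Once $R^{\mathrm{BB}}\stackrel{d}{=}\tfrac12|Z|$ is established, the half-normal density and the moments $\mathbb{E}[(|Z|/2)^p]=2^{-p}2^{p/2}\Gamma(\tfrac{p+1}{2})/\sqrt\pi$ for $p>-1$ are immediate.
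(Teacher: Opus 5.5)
Your part (i) is correct. The event identity $\{R^{\mathrm{BM}}\geq\rho\}=\{T_\rho\leq 1\}$, the scaling step and the substitution $t=\rho^{-2}$ are the paper's argument. For the moments, your Mellin formula for $\int_0^\infty x^{\mu-1}e^{-x}K_\nu(x)\D{x}$ with $\mu=\frac p2+\frac12$, $\nu=\frac14$, followed by Legendre duplication, does reproduce the stated formula for $p>-\frac12$. The paper instead reads the moments off $\mathbb{E}[e^{-\lambda T_1}]=\erfc((2\lambda)^{1/4})$ via Lemma~\ref{lem:erfc_moments}, which is your cross-check.

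Part (ii) has a genuine gap. The step you postpone is the entire content, and the route you sketch for it fails. Your starting formula is right; with \eqref{eq:joint_density} it reads $\mathbb{P}(R^{\mathrm{BB}}\geq\rho)=\int_0^1\!\int_\rho^\infty \frac{\sqrt{\rho}}{(1-t)\sqrt{2\pi^3t^3(x-\rho)}}e^{-\frac{x^2}{2t(1-t)}}\D{x}\D{t}$. However, the planar heat kernel contributes $(1-t)^{-1}$, not the one-dimensional $(1-t)^{-1/2}$. So after $s=t/(1-t)$ the $t$-integral becomes $e^{-x^2}\int_0^\infty (1+s)^{1/2}s^{-3/2}e^{-\frac{x^2}{2}(s+s^{-1})}\D{s}$. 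The leftover factor $(1+s)^{1/2}$ destroys the $K_{1/2}$ evaluation that works in dimension one. The result is a non-elementary function of $x$, asymptotic to $\sqrt{2\pi}\,x^{-1}$ as $x\to 0$ and to $2\sqrt{\pi}\,x^{-1}e^{-2x^2}$ as $x\to\infty$, so it is not a multiple of $x^{-1}e^{-2x^2}$. You would still have to show that its average against the law of $X_\rho$ equals $\erfc(\sqrt2\rho)=\mathbb{E}[e^{-2\rho X_\rho}]$. ``Differentiate in $\rho$'' or ``take a Mellin transform'' is not an argument for that.

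The missing idea is the space--time transformation \eqref{eq:space_time}. The paper uses it to rewrite $\{R^{\mathrm{BB}}<\rho\}$ as the event that Brownian motion with drift $-(\rho,0)$ never hits $\mathcal{R}_\rho$. Girsanov then gives $\mathbb{P}(R^{\mathrm{BB}}\geq\rho)=\mathbb{E}[e^{-\rho X_\rho-\frac{\rho^2}{2}T_\rho}]=\erfc(\sqrt2\rho)$, by Theorem~\ref{thm:joint_Laplace} with $\lambda=\rho^2/2$ and $\mu=\rho$.

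You can finish your double integral the same way if you change both variables at once. Set $s=\frac{t}{1-t}$ and $x'=\frac{x-\rho t}{1-t}$. This is a bijection of $(0,1)\times(\rho,\infty)$ onto $(0,\infty)\times(\rho,\infty)$ with $\D{t}\D{x}=(1-t)^3\D{s}\D{x'}$. Since $t=(1-t)s$, $x-\rho=(1-t)(x'-\rho)$ and $x=(1-t)(x'+\rho s)$, all powers of $1-t$ cancel, and $\frac{x^2}{2t(1-t)}=\frac{x'^2}{2s}+\rho x'+\frac{\rho^2 s}{2}$. The integrand becomes $e^{-\rho x'-\rho^2 s/2}$ times the joint density \eqref{eq:joint_density} at $(s,x')$, which is exactly the paper's Girsanov expression. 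Without this step, or an equivalent one, part (ii) remains unproved.
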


\begin{remark}
In principle, the interpretation of $T_1$ as the exit time from a degenerate wedge mentioned in Remark \ref{rem:one_fourth} should allow for the calculation of moments of $R^{\mathrm{BM}}$ via the Gauss--Laplace transform for the wedge exit time found in \cite[Proposition 1.1]{cone_exit}. However, using this approach without going through the ordinary Laplace transform as we do, would require repeated differentiation which seems difficult to carry out in practice.
\end{remark}

\begin{remark}
In Janson’s terminology from \cite{Janson}, both $R^{\mathrm{BM}}$ and $R^{\mathrm{BB}}$ are examples of random variables with \emph{moments of gamma type}. 
\end{remark}




\subsection{Organization of the paper}
The rest of the paper is organized as follows. We introduce the three planar hulls in Section \ref{sec:three_hulls} and discuss their properties, with the star hull and radial functions covered in Section \ref{sec:radial} and the other hulls covered in Section \ref{sec:other_hulls}. Section \ref{sec:preliminaries} gathers some probabilistic preliminaries such as Brownian scaling and space-time transformations between Brownian motion and Brownian bridge in Section \ref{sec:scaling}, conformal invariance and time change in Section \ref{sec:conformal_invariance}, explicit formulas for the Laplace transform of quadratic functionals of Bessel bridges in Section \ref{sec:quad_functionals}, and some results involving the complementary error function in Section \ref{sec:erfc}. Theorem \ref{thm:joint_Laplace} is proven in Section \ref{sec:joint_Laplace}, Theorem \ref{thm:conditional_structure} in Section \ref{sec:conditional_structure}, Theorem \ref{thm:densities} in Section \ref{sec:densities}, and Theorems \ref{thm:bridge_areas} and \ref{thm:BM_star_area} are proven in Section \ref{sec:areas}. Finally, some open questions and further directions for study are discussed in Section \ref{sec:open}.


\section{Three kinds of hulls}\label{sec:three_hulls}

In this section we recall the definitions of the three kinds of hulls studied in this paper and prove several properties such as the inclusion chain \eqref{eq:inclusion_chain}. We stress that these are by no means the only hulls of Brownian motion that one can consider. For an example of a different type of hull, see Section 1.6 of \cite{Sobolev_balls} for computations of the expected intrinsic volumes of the \emph{zonoid hull} of $d$-dimensional Brownian motion and bridge.

Throughout this section, uppercase letters in $\mathcal{MATHCAL}$ font are used to denote subsets of $\mathbb{R}^2$, typically with a certain property indicated by the particular choice of letter. For example, we use $\mathcal{A}$ for arbitrary sets, $\mathcal{D}$ for closed disks, $\mathcal{K}$ for compact sets which may or may not contain the origin, $\mathcal{S}$ for starshaped sets, and $\mathcal{U}$ for open sets. For two points $\boldsymbol{x},\boldsymbol{y} \in \mathbb{R}^2$, we write
\[
[\boldsymbol{x}:\boldsymbol{y}] := \{\lambda \boldsymbol{x} + (1-\lambda)\boldsymbol{y} : 0 \leq \lambda \leq 1\}
\]
for the line segment joining $\boldsymbol{x}$ and $\boldsymbol{y}$. Note that $[\boldsymbol{x}:\boldsymbol{y}]=[\boldsymbol{y}:\boldsymbol{x}]$, and that $[\boldsymbol{x}:\boldsymbol{x}]$ equals the singleton $\{\boldsymbol{x}\}$. Recall that a set $\mathcal{S}\subset \mathbb{R}^2$ is said to be \emph{starshaped} with respect to a point $\boldsymbol{p}\in \mathcal{S}$ if $[\boldsymbol{p}:\boldsymbol{x}]\subset \mathcal{S}$ for all $\boldsymbol{x}\in \mathcal{S}$. The reader can refer to \cite{starshaped} for an extensive survey on starshaped sets. 


\subsection{Star hulls and radial functions}\label{sec:radial}

\begin{definition}\label{def:star_hull}
The \textbf{star hull} of a set $\mathcal{A}\subset\mathbb{R}^2$ with respect to the origin is the set
\begin{align*}
\strh(\mathcal{A})&:=\bigcup_{\boldsymbol{x} \in \mathcal{A}} [\boldsymbol{0} : \boldsymbol{x}]\\
&=\{\lambda\boldsymbol{x}:0\leq \lambda\leq 1\text{ and }\boldsymbol{x}\in\mathcal{A}\}.
\end{align*}
\end{definition}

It is clear that $\strh(\mathcal{A})$ is starshaped with respect to $\boldsymbol{0}$ and that $\mathcal{A}\subset\strh(\mathcal{A})$. Moreover, it is straightforward to check that if $\mathcal{S}$ is any starshaped set with respect to $\boldsymbol{0}$ that contains $\mathcal{A}$, then $\mathcal{S}$ contains $\strh(\mathcal{A})$ as well. In this sense, $\strh(\mathcal{A})$ is the smallest starshaped set with respect to the origin that contains $\mathcal{A}$. The next proposition shows that the star hull preserves compactness.

\begin{proposition}\label{prop:SH_compact}
If $\mathcal{K} \subset \mathbb{R}^2$ is compact, then $\strh(\mathcal{K})$ is also compact.
\end{proposition}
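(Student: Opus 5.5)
The plan is to realize $\strh(\mathcal{K})$ as a continuous image of a compact set. By Definition \ref{def:star_hull},
\[
\strh(\mathcal{K})=\{\lambda\boldsymbol{x}:0\leq\lambda\leq 1,\ \boldsymbol{x}\in\mathcal{K}\}=\Phi\big([0,1]\times\mathcal{K}\big),
\]
where $\Phi:\mathbb{R}\times\mathbb{R}^2\to\mathbb{R}^2$ is the scalar multiplication map $\Phi(\lambda,\boldsymbol{x})=\lambda\boldsymbol{x}$.

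The key steps, in order, are as follows.
\begin{enumerate}[label=(\arabic*)]
\item The set $[0,1]\times\mathcal{K}$ is compact in $\mathbb{R}\times\mathbb{R}^2$, since it is a product of two compact sets (Tychonoff, or directly Heine--Borel, because it is closed and bounded).
\item The map $\Phi$ is continuous. Each coordinate of $\Phi$ is a polynomial in $(\lambda,\boldsymbol{x})$.
\item The continuous image of a compact set is compact, so $\strh(\mathcal{K})$ is compact.
\end{enumerate}

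As an alternative check that avoids the product argument, one can argue directly. Boundedness is immediate, since $|\lambda\boldsymbol{x}|\leq\sup_{\mathcal{K}}|\boldsymbol{x}|$. For closedness, take $\lambda_n\boldsymbol{x}_n\to\boldsymbol{y}$ with $\lambda_n\in[0,1]$ and $\boldsymbol{x}_n\in\mathcal{K}$. Passing to a subsequence, $\lambda_n\to\lambda\in[0,1]$ and $\boldsymbol{x}_n\to\boldsymbol{x}\in\mathcal{K}$. Then $\boldsymbol{y}=\lambda\boldsymbol{x}\in\strh(\mathcal{K})$.

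There is no real obstacle here. The only point needing care is to use the second description in Definition \ref{def:star_hull}, as the image of the product, rather than the union over $\boldsymbol{x}\in\mathcal{K}$. An arbitrary union of compact segments need not be closed, and it is compactness of the parameter set that rules out escaping limits.
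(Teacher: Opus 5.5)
Your proof is correct and is essentially the paper's argument: the paper also writes $\strh(\mathcal{K})$ as the image of the compact set $[0,1]\times\mathcal{K}$ under the continuous map $(\lambda,\boldsymbol{x})\mapsto\lambda\boldsymbol{x}$. Your direct sequential check of boundedness and closedness is a valid but redundant confirmation.
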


\begin{proof}
Consider the continuous map $f : [0,1]\times \mathbb{R}^2 \to \mathbb{R}^2$ given by $f(\lambda,\boldsymbol{x}) = \lambda \boldsymbol{x}$. Since $[0,1]\times\mathcal{K}$ is compact, the image $f([0,1]\times\mathcal{K})=\strh(\mathcal{K})$ is also compact.
\end{proof}

The relevant characteristics of the star hull of a planar set are encoded by the \emph{radial function} of that set. For $\theta \in [0,2\pi)$, the radial function of $\mathcal{A}\subset\mathbb{R}^2$ is defined by
\begin{equation}\label{eq:radial_function}
r_\mathcal{A}(\theta) := \sup\{\rho \geq 0 : (\rho\cos\theta,\rho\sin\theta) \in \mathcal{A}\},
\end{equation}
with the convention that $\sup\{\varnothing\}=0$. In the following proposition, we list some useful properties of the radial function of a compact set containing the origin.

\begin{proposition}\label{prop:radial_function}
Let $\mathcal{K} \subset \mathbb{R}^2$ be compact with $\boldsymbol{0} \in \mathcal{K}$. Then we have:
\begin{enumerate}[label=(\roman*)]
\item $r_\mathcal{K}(\theta) < \infty$ for all $\theta \in [0,2\pi)$;
\item $\big(r_\mathcal{K}(\theta)\cos\theta,r_\mathcal{K}(\theta)\sin\theta\big)\in \mathcal{K}$ for all $\theta \in [0,2\pi)$;
\item $r_\mathcal{K}$ is upper semicontinuous on $[0,2\pi)$;
\item $r_{\strh(\mathcal{K})}(\theta) = r_\mathcal{K}(\theta)$ for all $\theta \in [0,2\pi)$.
\end{enumerate}
\end{proposition}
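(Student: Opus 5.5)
The plan is to handle the four items in order, each using only compactness of $\mathcal{K}$ and the fact that $\boldsymbol{0}\in\mathcal{K}$. Throughout, write $\boldsymbol{e}_\theta:=(\cos\theta,\sin\theta)$ and $I_\theta:=\{\rho\ge 0:\rho\boldsymbol{e}_\theta\in\mathcal{K}\}$, so that $r_\mathcal{K}(\theta)=\sup I_\theta$.

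\emph{Items (i) and (ii).} For (i), note that $\mathcal{K}$ is bounded, say $\mathcal{K}\subset\{|\boldsymbol{x}|\le M\}$. Then every $\rho\in I_\theta$ satisfies $\rho\le M$, so $r_\mathcal{K}(\theta)\le M<\infty$. For (ii), observe that $I_\theta$ is nonempty (it contains $0$, since $\boldsymbol{0}\in\mathcal{K}$), bounded, and closed, being the preimage of the closed set $\mathcal{K}$ under the continuous map $\rho\mapsto\rho\boldsymbol{e}_\theta$ on $[0,\infty)$. Hence $I_\theta$ contains its supremum. This is exactly where the hypothesis $\boldsymbol{0}\in\mathcal{K}$ is used: it rules out the convention $\sup\varnothing=0$ producing a point outside $\mathcal{K}$.

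\emph{Item (iii).} This is the only step needing a little care. I would argue by sequences. Take $\theta_n\to\theta$ in $[0,2\pi)$ and pass to a subsequence realizing $\limsup_n r_\mathcal{K}(\theta_n)=:L$. By (i), and the uniform bound $M$ from the proof of (i), $L\le M$ is finite. By (ii), the points $r_\mathcal{K}(\theta_n)\boldsymbol{e}_{\theta_n}$ lie in $\mathcal{K}$, and they converge to $L\boldsymbol{e}_\theta$ by continuity. Since $\mathcal{K}$ is closed, $L\boldsymbol{e}_\theta\in\mathcal{K}$, so $L\le r_\mathcal{K}(\theta)$, which is upper semicontinuity.

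\emph{Item (iv).} Since $\mathcal{K}\subset\strh(\mathcal{K})$, we immediately get $r_\mathcal{K}\le r_{\strh(\mathcal{K})}$. For the reverse inequality, take $\rho>0$ with $\rho\boldsymbol{e}_\theta\in\strh(\mathcal{K})$. By Definition \ref{def:star_hull}, $\rho\boldsymbol{e}_\theta=\lambda\boldsymbol{x}$ for some $\boldsymbol{x}\in\mathcal{K}$ and $\lambda\in(0,1]$. Then $\boldsymbol{x}=(\rho/\lambda)\boldsymbol{e}_\theta$ lies on the same ray, so $\rho\le\rho/\lambda\le r_\mathcal{K}(\theta)$. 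The case $\rho=0$ is trivial. Taking the supremum over such $\rho$ gives equality. Note that $\strh(\mathcal{K})$ is compact by Proposition \ref{prop:SH_compact}, although this is not actually needed for (iv).
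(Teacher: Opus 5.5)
Your proposal is correct and follows the paper's proof essentially step for step: boundedness gives (i), attainment of the supremum of the closed, bounded, nonempty set $I_\theta$ gives (ii), and the same subsequence-plus-closedness argument gives (iii). The only difference is in (iv). The paper applies (ii) to $\strh(\mathcal{K})$ to get the attained point $r_{\strh(\mathcal{K})}(\theta)\boldsymbol{e}_\theta\in\strh(\mathcal{K})$, which implicitly relies on Proposition \ref{prop:SH_compact}. You instead bound every admissible $\rho$ directly, which needs no compactness and in fact shows $r_{\strh(\mathcal{A})}=r_{\mathcal{A}}$ for arbitrary $\mathcal{A}$.
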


\begin{proof}
Property \emph{(i)} is an immediate consequence of the boundedness of $\mathcal{K}$. 

Property \emph{(ii)} follows from the fact that for each $\theta \in [0,2\pi)$, the supremum appearing in \eqref{eq:radial_function} is always attained by some $\rho\geq 0$ since $\boldsymbol{0}\in\mathcal{K}$ and $\mathcal{K}$ is compact. 

Property \emph{(iii)} requires a little more work to prove than the previous properties but is still straightforward. First fix $\theta\in [0,2\pi)$, and let $\{\theta_n\}_{n\geq 1}$ be any sequence of angles converging to $\theta$. Put $L(\theta)=\limsup_{n\to\infty}r_\mathcal{K}(\theta_n)$, and pick a subsequence $\{\theta_{n_k}\}_{k\geq 1}$ with $\lim_{k\to\infty}r_\mathcal{K}(\theta_{n_k})=L(\theta)$. We know from Property \emph{(ii)} that the sequence of points $\{\boldsymbol{x}_k\}_{k\geq 1}$ defined by 
\[
\boldsymbol{x}_k:=(r_\mathcal{K}(\theta_{n_k})\cos\theta_{n_k},r_\mathcal{K}(\theta_{n_k})\sin\theta_{n_k})
\]
is contained in $\mathcal{K}$. Moreover, we have $\lim_{k\to\infty}\boldsymbol{x}_k=(L(\theta)\cos\theta,L(\theta)\sin\theta)=:\boldsymbol{x}$ with $\boldsymbol{x}\in\mathcal{K}$ being another consequence of compactness. Lastly, since the point $(L(\theta)\cos\theta,L(\theta)\sin\theta)$ is an element of $\mathcal{K}$, we get from the supremum in the definition \eqref{eq:radial_function} that $L(\theta)\leq r_\mathcal{K}(\theta)$. Therefore, $\limsup_{n\to\infty}r_\mathcal{K}(\theta_n)\leq r_\mathcal{K}(\theta)$ as required to conclude the upper semicontinuity of $r_\mathcal{K}$ at $\theta$.

Property \emph{(iv)} is proved by showing that each radial function is bounded below by the other. Clearly $r_{\strh(\mathcal{K})}(\theta)\geq r_{\mathcal{K}}(\theta)$ holds for all $\theta \in [0,2\pi)$ since $\strh(\mathcal{K})\supset\mathcal{K}$. Note that the reverse inequality holds trivially if $r_{\strh(\mathcal{K})}(\theta)=0$, so we can assume that $r_{\strh(\mathcal{K})}(\theta)>0$. For each $\theta \in [0,2\pi)$ where this assumption holds, consider the point $\boldsymbol{x}_\theta\neq \boldsymbol{0}$ defined by
\[
\boldsymbol{x}_\theta:=\big(r_{\strh(\mathcal{K})}(\theta)\cos\theta,r_{\strh(\mathcal{K})}(\theta)\sin\theta\big).
\]
We can use Property \emph{(ii)} to deduce that $\boldsymbol{x}_\theta\in\strh(\mathcal{K})$. Now it follows from Definition \ref{def:star_hull} that $\rho_\theta^{-1}\boldsymbol{x}_\theta\in \mathcal{K}$ for some $0<\rho_\theta\leq 1$. This allows us to write 
\[
r_{\mathcal{K}}(\theta)\geq \mleft|\rho_\theta^{-1}\boldsymbol{x}_\theta\mright|=\rho_\theta^{-1}r_{\strh(\mathcal{K})}(\theta)\geq r_{\strh(\mathcal{K})}(\theta).
\]
\end{proof}

The next proposition expresses the area of the star hull of a set in terms of the radial function of the set.

\begin{proposition}\label{prop:area_formula}
Let $\mathcal{K} \subset \mathbb{R}^2$ be compact with $\boldsymbol{0} \in \mathcal{K}$. Then
\[
\mathrm{area}\big(\strh(\mathcal{K})\big)=\frac{1}{2}\int_0^{2\pi} r_\mathcal{K}(\theta)^2\D{\theta}.
\]
\end{proposition}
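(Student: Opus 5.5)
The plan is to write $\strh(\mathcal{K})$ in polar coordinates as the region under the graph of its radial function, and then integrate. By Proposition \ref{prop:radial_function}(iv), $r_{\strh(\mathcal{K})}=r_\mathcal{K}$, so it suffices to show
\[
\strh(\mathcal{K})=\{(\rho\cos\theta,\rho\sin\theta): \theta\in[0,2\pi),\ 0\le\rho\le r_\mathcal{K}(\theta)\}.
\]
For ``$\supset$'', fix $\theta$. By Proposition \ref{prop:radial_function}(ii), the point $\boldsymbol{x}_\theta:=(r_\mathcal{K}(\theta)\cos\theta,r_\mathcal{K}(\theta)\sin\theta)$ lies in $\mathcal{K}$. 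Hence the segment $[\boldsymbol{0}:\boldsymbol{x}_\theta]$ lies in $\strh(\mathcal{K})$ by Definition \ref{def:star_hull}, and this segment is exactly the set of points with angle $\theta$ and $0\le\rho\le r_\mathcal{K}(\theta)$. For ``$\subset$'', take a point of $\strh(\mathcal{K})$ with polar coordinates $(\rho,\theta)$. By the definition \eqref{eq:radial_function} of the radial function applied to $\strh(\mathcal{K})$, we get $\rho\le r_{\strh(\mathcal{K})}(\theta)=r_\mathcal{K}(\theta)$. The origin is covered trivially.

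Next I need measurability. By Proposition \ref{prop:radial_function}(iii), $r_\mathcal{K}$ is upper semicontinuous, hence Borel. So the set $\{(\rho,\theta):0\le\rho\le r_\mathcal{K}(\theta)\}\subset[0,\infty)\times[0,2\pi)$ is Borel; it is in fact closed in $[0,\infty)\times[0,2\pi)$. Also, $\strh(\mathcal{K})$ is compact by Proposition \ref{prop:SH_compact}.

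Finally, I apply the polar coordinate change of variables $\D{x}\D{y}=\rho\D{\rho}\D{\theta}$. The polar map is a bijection from $(0,\infty)\times[0,2\pi)$ onto $\mathbb{R}^2\setminus\{\boldsymbol{0}\}$, and the origin has measure zero. Tonelli's theorem then gives
\[
\mathrm{area}(\strh(\mathcal{K}))=\int_0^{2\pi}\int_0^{r_\mathcal{K}(\theta)}\rho\D{\rho}\D{\theta}=\frac12\int_0^{2\pi}r_\mathcal{K}(\theta)^2\D{\theta}.
\]
The only mildly delicate step is the measurability of the region under the graph and the justification of Tonelli's theorem. Both are handled by upper semicontinuity and the nonnegativity of the integrand, so I expect no real obstacle.
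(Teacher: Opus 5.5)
Your proof is correct and follows the same route as the paper. You identify each radial section of $\strh(\mathcal{K})$ as the segment from $\boldsymbol{0}$ to $r_\mathcal{K}(\theta)(\cos\theta,\sin\theta)$ using Proposition~\ref{prop:radial_function}, then integrate in polar coordinates with Tonelli's theorem. You spell out the two inclusions and the measurability (via upper semicontinuity of $r_\mathcal{K}$) more explicitly than the paper, which simply notes that $\strh(\mathcal{K})$ is Borel.
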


\begin{proof}
For each $\theta\in [0,2\pi)$, the radial sections $\strh(\mathcal{K}) \cap \{(\rho\cos\theta,\rho\sin\theta):\rho\geq 0\}$ of $\strh(\mathcal{K})$ are precisely the line segments $[\boldsymbol{0}:\big(r_{\strh(\mathcal{K})}(\theta)\cos\theta,r_{\strh(\mathcal{K})}(\theta)\sin\theta\big)]$. Since $\strh(\mathcal{K})$ is a Borel set, we can use Tonelli's theorem along with polar coordinates and Property \emph{(iv)} of Proposition \ref{prop:radial_function} to write
\begin{align*}
\mathrm{area}\big(\strh(\mathcal{K})\big)=\int_{\mathbb{R}^2}\mathbbm{1}_{\strh(\mathcal{K})}(\boldsymbol{x})\D{\boldsymbol{x}}&=\int_0^{2\pi}\int_0^\infty \mathbbm{1}_{\strh(\mathcal{K})}(\rho\cos\theta,\rho\sin\theta)\rho\D{\rho}\D{\theta}\\
&=\int_0^{2\pi}\int_0^{r_{\strh(\mathcal{K})}(\theta)} \rho \D{\rho}\D{\theta}\\
&=\frac{1}{2}\int_0^{2\pi} r_\mathcal{K}(\theta)^2\D{\theta}.
\end{align*}
\end{proof}

An important consequence of Proposition \ref{prop:area_formula} is that it leads to a simple formula for the expected area of the star hull of a rotationally invariant random continuous path in $\mathbb{R}^2$ that starts at the origin. In particular, this formula applies to planar Brownian motion and bridge. Note that a random star hull which is almost surely compact may also have infinite expected area.

\begin{lemma}\label{lem:mean_area_formula}
Let $(\boldsymbol{X}_t:0\leq t \leq 1)$ be a random path in $\mathbb{R}^2$ satisfying:
\begin{enumerate}[label=(\roman*)]
\item $\boldsymbol{X}_0=\boldsymbol{0}$ almost surely;
\item $(\boldsymbol{X}_t:0\leq t \leq 1)$ is continuous almost surely;
\item The law of $(\boldsymbol{X}_t:0\leq t \leq 1)$ is rotationally invariant, that is, 
\[
(\boldsymbol{X}_t:0\leq t \leq 1)\stackrel{d}{=}(\mathfrak{M}_\theta\,\boldsymbol{X}_t:0\leq t \leq 1)
\]
for any $\theta\in\mathbb{R}$ and counterclockwise rotation matrix 
\[
\mathfrak{M}_\theta=
\begin{bmatrix}
\cos\theta & -\sin\theta \\
\sin\theta & \cos\theta 
\end{bmatrix}.
\]
\end{enumerate}
Then the star hull of the trace of $(\boldsymbol{X}_t:0\leq t \leq 1)$ is almost surely compact and its expected area can be expressed in terms of the radial function $r_{\boldsymbol{X}_{[0,1]}}(\theta)$ as 
\[
\mathbb{E}\big[\mathrm{area}\big(\strh (\boldsymbol{X}_{[0,1]})\big)\big] = \pi\,\mathbb{E}\big[r_{\boldsymbol{X}_{[0,1]}}(0)^2\big].
\]
\end{lemma}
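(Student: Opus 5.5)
The plan is to combine Proposition \ref{prop:area_formula} with Tonelli's theorem and rotational invariance.

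\textbf{Compactness.} On the almost sure event where the path is continuous, the trace $\boldsymbol{X}_{[0,1]}$ is the continuous image of $[0,1]$. It is therefore compact and contains $\boldsymbol{0}$ by (i). Proposition \ref{prop:SH_compact} then gives that $\strh(\boldsymbol{X}_{[0,1]})$ is compact almost surely.

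\textbf{Area as an integral.} Proposition \ref{prop:area_formula} gives, almost surely,
\[
\mathrm{area}\big(\strh(\boldsymbol{X}_{[0,1]})\big)=\frac{1}{2}\int_0^{2\pi} r_{\boldsymbol{X}_{[0,1]}}(\theta)^2\D{\theta}.
\]
Taking expectations and exchanging expectation and the $\theta$-integral by Tonelli (the integrand is nonnegative) yields $\frac12\int_0^{2\pi}\mathbb{E}[r_{\boldsymbol{X}_{[0,1]}}(\theta)^2]\D{\theta}$. This is valid even if the expectation is infinite.

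\textbf{Measurability, the main technical point.} Tonelli requires the map $(\omega,\theta)\mapsto r_{\boldsymbol{X}_{[0,1]}(\omega)}(\theta)$ to be jointly measurable. I would obtain this by writing, for each $c\ge 0$,
\[
\{r(\theta)\ge c\}=\{\exists t\in[0,1]:\ \boldsymbol{X}_t \text{ lies on the ray at angle }\theta\text{ with }|\boldsymbol{X}_t|\ge c\},
\]
using Property \emph{(ii)} of Proposition \ref{prop:radial_function} that the supremum is attained. By continuity, this event is the set where
\[
\inf_{t\in[0,1]}\operatorname{dist}\big(\boldsymbol{X}_t,\{(\rho\cos\theta,\rho\sin\theta):\rho\ge c\}\big)=0.
\]
By continuity in $t$, the infimum can be taken over rational $t$. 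The distance to a closed ray is jointly continuous in $(\boldsymbol{x},\theta)$. Hence the event is jointly measurable in $(\omega,\theta)$. Alternatively one could cite upper semicontinuity in $\theta$ (Property \emph{(iii)}) together with measurability in $\omega$ for each fixed $\theta$, but the direct argument seems cleaner.

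\textbf{Rotational invariance.} For each $\theta$, the radial function of the rotated trace satisfies
\[
r_{\mathfrak{M}_{-\theta}\boldsymbol{X}_{[0,1]}}(0)=r_{\boldsymbol{X}_{[0,1]}}(\theta).
\]
Assumption (iii) says $\mathfrak{M}_{-\theta}\boldsymbol{X}$ has the same law as $\boldsymbol{X}$. Since the radial function at $0$ is a measurable functional of the path by the argument above, it follows that $r_{\boldsymbol{X}_{[0,1]}}(\theta)\stackrel{d}{=}r_{\boldsymbol{X}_{[0,1]}}(0)$. Thus $\mathbb{E}[r(\theta)^2]$ is constant in $\theta$, and the integral equals $\frac12\cdot 2\pi\,\mathbb{E}[r(0)^2]=\pi\,\mathbb{E}[r_{\boldsymbol{X}_{[0,1]}}(0)^2]$, which is the claim.
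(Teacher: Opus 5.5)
Your proof is correct and follows the paper's route exactly (compactness via continuity and Proposition~\ref{prop:SH_compact}, then Proposition~\ref{prop:area_formula}, Tonelli, and rotational invariance through $r_{\mathfrak{M}_{-\theta}\boldsymbol{X}_{[0,1]}}(0)=r_{\boldsymbol{X}_{[0,1]}}(\theta)$), and your rational-$t$ joint-measurability argument supplies a detail the paper leaves implicit. Do drop the suggested alternative, though: upper semicontinuity in $\theta$ plus measurability in $\omega$ for each fixed $\theta$ does not by itself imply joint measurability (take $\Omega=[0,2\pi)$ and the indicator of $\{(\omega,\omega):\omega\in A\}$ with $A$ non-Borel), so the direct argument is the one to keep.
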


\begin{proof}
The almost sure compactness of $\strh (\boldsymbol{X}_{[0,1]})$ is a straightforward consequence of the continuity assumption and Proposition \ref{prop:SH_compact}. Moreover, the first two assumptions allow us to invoke Proposition \ref{prop:area_formula}, from which the expected area formula follows after an application of Tonelli's theorem together with the rotational invariance assumption.
\end{proof}


\subsection{Topological and convex hulls}\label{sec:other_hulls}

Recall that any open set $\mathcal{U}\subset\mathbb{R}^2$ is the union of at most countably many disjoint connected open sets. These are the so-called \emph{connected components} of $\mathcal{U}$.

\begin{definition}\label{def:top_hull}
The \textbf{topological hull} of a compact set $\mathcal{K}\subset\mathbb{R}^2$, denoted by $\hull(\mathcal{K})$, is the union of $\mathcal{K}$ together with all of the bounded connected components of $\mathbb{R}^2\setminus\mathcal{K}$. Equivalently, if $\mathcal{U}_\infty$ denotes the unique unbounded connected component of $\mathbb{R}^2\setminus\mathcal{K}$, then $\hull(\mathcal{K})=\mathbb{R}^2\setminus \mathcal{U}_\infty$.
\end{definition}

It is clear that $\mathcal{K}\subset\hull(\mathcal{K})$. The following proposition collects some other properties of the topological hull of a compact planar set. The last property shows that $\hull(\mathcal{K})$ is the smallest compact set containing $\mathcal{K}$ whose complement is connected. This gives a characterization of the topological hull that is closer in spirit to that of the star and convex hulls.

\begin{proposition}
Let $\mathcal{K} \subset \mathbb{R}^2$ be compact. Then we have:
\begin{enumerate}[label=(\roman*)]
\item $\mathbb{R}^2 \setminus \hull(\mathcal{K})$ is connected;
\item $\hull(\mathcal{K})$ is compact;
\item If $\mathcal{K}' \subset \mathbb{R}^2$ is any compact set containing $\mathcal{K}$ such that $\mathbb{R}^2\setminus \mathcal{K}'$ is connected, then $\mathcal{K}'$ contains $\hull(\mathcal{K})$ as well.
\end{enumerate}
\end{proposition}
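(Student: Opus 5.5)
We prove the three properties in the order (ii), (i), (iii), because each one uses the previous. Throughout, fix a closed disk $\mathcal{D}$ centered at the origin with $\mathcal{K}\subset\mathcal{D}$; it exists since $\mathcal{K}$ is bounded. The set $\mathbb{R}^2\setminus\mathcal{D}$ is connected and disjoint from $\mathcal{K}$, and it is unbounded. It therefore lies in a single connected component of $\mathbb{R}^2\setminus\mathcal{K}$, and that component must be $\mathcal{U}_\infty$. Every other component is then contained in $\mathcal{D}$, and hence is bounded. This justifies the uniqueness of $\mathcal{U}_\infty$ claimed in Definition \ref{def:top_hull}, and it gives $\hull(\mathcal{K})=\mathbb{R}^2\setminus\mathcal{U}_\infty\subset\mathcal{D}$.

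For (ii), note that $\mathbb{R}^2\setminus\mathcal{K}$ is open and $\mathbb{R}^2$ is locally connected. So each connected component of an open set is open, and in particular $\mathcal{U}_\infty$ is open. Hence $\hull(\mathcal{K})=\mathbb{R}^2\setminus\mathcal{U}_\infty$ is closed. It is bounded because it lies in $\mathcal{D}$, so it is compact. For (i), we have $\mathbb{R}^2\setminus\hull(\mathcal{K})=\mathcal{U}_\infty$ directly from Definition \ref{def:top_hull}, and $\mathcal{U}_\infty$ is connected by definition of a component.

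For (iii), let $\mathcal{K}'\supset\mathcal{K}$ be compact with $\mathcal{V}:=\mathbb{R}^2\setminus\mathcal{K}'$ connected. Taking complements, it suffices to show $\mathcal{V}\subset\mathcal{U}_\infty$. First, $\mathcal{V}\subset\mathbb{R}^2\setminus\mathcal{K}$ because $\mathcal{K}\subset\mathcal{K}'$. Since $\mathcal{V}$ is connected, it lies inside a single component of $\mathbb{R}^2\setminus\mathcal{K}$. Next, $\mathcal{V}$ is unbounded, since $\mathcal{K}'$ is bounded, and it is nonempty. So $\mathcal{V}$ contains points outside a large disk $\mathcal{D}'$ containing $\mathcal{D}$ and $\mathcal{K}'$. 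Such points lie in $\mathbb{R}^2\setminus\mathcal{D}\subset\mathcal{U}_\infty$. The component containing $\mathcal{V}$ therefore meets $\mathcal{U}_\infty$, so it equals $\mathcal{U}_\infty$. This gives $\mathcal{V}\subset\mathcal{U}_\infty$, and hence $\hull(\mathcal{K})\subset\mathcal{K}'$.

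The only step with real content is the topological fact that components of open subsets of $\mathbb{R}^2$ are open; this follows because open disks are connected. Everything else is bookkeeping with the disk $\mathcal{D}$. The one place that needs care is to show that $\mathcal{U}_\infty$ exists and is unique before using it, which the first paragraph does.
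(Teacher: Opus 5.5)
Your proof is correct and takes the same approach as the paper. You enclose $\mathcal{K}$ in a closed disk $\mathcal{D}$, note that the connected unbounded set $\mathbb{R}^2\setminus\mathcal{D}$ lies in $\mathcal{U}_\infty$ to get boundedness, and for (iii) show that the connected unbounded set $\mathbb{R}^2\setminus\mathcal{K}'$ must lie in $\mathcal{U}_\infty$; you also spell out the uniqueness of $\mathcal{U}_\infty$ and the openness of components of open sets, which the paper takes for granted.
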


\begin{proof}
Property \emph{(i)} is an immediate consequence of Definition \ref{def:top_hull}. 

To prove Property \emph{(ii)}, first note that $\hull(\mathcal{K})$ is closed in $\mathbb{R}^2$ since it is the complement of the open set $\mathcal{U}_\infty$. Next we show that $\hull(\mathcal{K})$ is bounded. To see that this is the case, note that since $\mathcal{K}$ is compact, $\mathcal{K}$ must be contained in some closed and bounded disk $\mathcal{D}$. Hence, $\mathbb{R}^2\setminus\mathcal{D}\subset \mathbb{R}^2\setminus\mathcal{K}$. However, $\mathbb{R}^2\setminus\mathcal{D}$ is both connected and unbounded, so it must lie completely within the unique unbounded connected component of $\mathbb{R}^2\setminus\mathcal{K}$. In other words, we have $\mathbb{R}^2\setminus\mathcal{D}\subset \mathcal{U}_\infty$. Thus, $\mathbb{R}^2\setminus \mathcal{U}_\infty\subset\mathcal{D}$, and it follows that $\hull(\mathcal{K})$ is bounded. Now the compactness of $\hull(\mathcal{K})$ is a consequence of the Heine--Borel theorem.

Property \emph{(iii)} is proved using a similar argument. First note that the inclusion $\mathbb{R}^2\setminus\mathcal{K}'\subset \mathbb{R}^2\setminus\mathcal{K}$ is an obvious consequence of $\mathcal{K}\subset\mathcal{K}'$. Together with the fact that $\mathbb{R}^2\setminus\mathcal{K}'$ is both connected and unbounded, this implies that $\mathbb{R}^2\setminus\mathcal{K}'\subset\mathcal{U}_\infty$. Therefore, $\mathbb{R}^2\setminus\mathcal{U}_\infty\subset\mathcal{K}'$, and Property \emph{(iii)} follows.
\end{proof}

The third kind of hull dealt with in this paper is the familiar convex hull. Since we are working in the plane, it is convenient to define the convex hull via Carath\'{e}odory's theorem; see \cite[Theorem 1.1.4]{Schneider}.
\begin{definition}\label{def:conv_hull}
The \textbf{convex hull} of a set $\mathcal{A}\subset\mathbb{R}^2$ is the set
\begin{equation*}
\conv(\mathcal{A}):=\left\{ \sum_{i=1}^3\lambda_i \boldsymbol{x}_i:\lambda_i\geq 0,\,\sum_{i=1}^3\lambda_i=1,\text{ and }\boldsymbol{x}_i\in\mathcal{A}\right\}.
\end{equation*}
\end{definition}

It is well known that the convex hull of a compact planar set is compact. Moreover, the convex hull of an arbitrary set $\mathcal{A}\subset\mathbb{R}^2$ is the intersection of all convex subsets of $\mathbb{R}^2$ which contain $\mathcal{A}$. In this sense, $\conv(\mathcal{A})$ is the smallest convex set that contains $\mathcal{A}$. See Section 1.1 of \cite{Schneider} for the proofs of these claims and other properties of convex hulls. 

The last result of this section establishes the inclusion chain \eqref{eq:inclusion_chain} mentioned in the introduction.

\begin{lemma}\label{lem:inclusion_chain}
Let $\mathcal{K} \subset \mathbb{R}^2$ be compact with $\boldsymbol{0} \in \mathcal{K}$. Then we have the inclusion chain
\[
\mathcal{K} \subset \hull(\mathcal{K}) \subset \strh(\mathcal{K}) \subset \conv(\mathcal{K}).
\]
\end{lemma}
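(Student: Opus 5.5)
The plan is to prove the three inclusions separately, since each follows from a fairly direct argument. The first, $\mathcal{K}\subset\hull(\mathcal{K})$, is immediate from Definition \ref{def:top_hull}.

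For $\strh(\mathcal{K})\subset\conv(\mathcal{K})$, we take a point $\lambda\boldsymbol{x}$ with $\boldsymbol{x}\in\mathcal{K}$ and $0\le\lambda\le 1$. We then write it as
\[
\lambda\boldsymbol{x}+(1-\lambda)\boldsymbol{0}+0\cdot\boldsymbol{0},
\]
which is a convex combination of three points of $\mathcal{K}$ because $\boldsymbol{0}\in\mathcal{K}$. By Definition \ref{def:conv_hull}, this point lies in $\conv(\mathcal{K})$.

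The main step is $\hull(\mathcal{K})\subset\strh(\mathcal{K})$. Using the equivalent form $\hull(\mathcal{K})=\mathbb{R}^2\setminus\mathcal{U}_\infty$, it suffices to show $\mathbb{R}^2\setminus\strh(\mathcal{K})\subset\mathcal{U}_\infty$. Fix $\boldsymbol{y}\notin\strh(\mathcal{K})$; note $\boldsymbol{y}\neq\boldsymbol{0}$ since $\boldsymbol{0}\in\mathcal{K}$. Consider the outward ray $\{s\boldsymbol{y}:s\ge 1\}$.

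We claim this ray avoids $\mathcal{K}$. Suppose instead that $s\boldsymbol{y}\in\mathcal{K}$ for some $s\ge1$. Then $\boldsymbol{y}=s^{-1}(s\boldsymbol{y})\in[\boldsymbol{0}:s\boldsymbol{y}]\subset\strh(\mathcal{K})$, which is a contradiction. Hence the ray is a connected, unbounded subset of $\mathbb{R}^2\setminus\mathcal{K}$ containing $\boldsymbol{y}$.

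The ray therefore lies in a single connected component of $\mathbb{R}^2\setminus\mathcal{K}$. That component contains an unbounded set, so it is unbounded, and hence it is $\mathcal{U}_\infty$. Thus $\boldsymbol{y}\in\mathcal{U}_\infty$, i.e.\ $\boldsymbol{y}\notin\hull(\mathcal{K})$.

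The only point needing care is the uniqueness of the unbounded component of $\mathbb{R}^2\setminus\mathcal{K}$, which is implicit in Definition \ref{def:top_hull}. It can be justified as in the proof of Property (ii) of the preceding proposition: the exterior of a large disk $\mathcal{D}\supset\mathcal{K}$ is connected, and every unbounded component must meet it. I expect no real obstacle beyond this bookkeeping.
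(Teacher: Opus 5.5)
Your proposal is correct and follows the paper's proof. The first two inclusions are handled exactly as in the paper. For $\hull(\mathcal{K})\subset\strh(\mathcal{K})$ you rely on the same key fact: the outward ray $\{s\boldsymbol{y}:s\geq 1\}$ is an unbounded connected subset of $\mathbb{R}^2\setminus\mathcal{K}$.

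The only difference is that you argue by contrapositive. This lets you skip the paper's step of choosing, via compactness, the point of $\mathcal{K}$ on the ray that is farthest from the origin. Your remark on uniqueness of the unbounded component is also not strictly needed: lying in any unbounded component already excludes $\boldsymbol{y}$ from $\hull(\mathcal{K})$.
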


\begin{proof}
The inclusion $\mathcal{K} \subset \hull(\mathcal{K})$ is immediate from Definition \ref{def:top_hull}, while the inclusion $\strh(\mathcal{K}) \subset \conv(\mathcal{K})$ follows from Definition \ref{def:conv_hull} by taking $\boldsymbol{x}_2=\boldsymbol{0}$ and $\lambda_3=0$. 

It remains to show that $\hull(\mathcal{K})\subset \strh(\mathcal{K})$. Towards this end, let $\boldsymbol{x} \in \hull(\mathcal{K})$. If $\boldsymbol{x} \in \mathcal{K}$ there is nothing to prove since $\mathcal{K}\subset\strh(\mathcal{K})$, so suppose that $\boldsymbol{x}\neq \boldsymbol{0}$ lies in some bounded connected component of $\mathbb{R}^2 \setminus \mathcal{K}$. Now define the ray
\[
\mathcal{R}_{\boldsymbol{x}} := \{\rho \boldsymbol{x} : \rho \geq 0\}
\]
from the origin through $\boldsymbol{x}$, and consider the point $\boldsymbol{y}\in\mathcal{R}_{\boldsymbol{x}}\cap\mathcal{K}$ that is farthest from the origin. Such a point exists since $\mathcal{R}_{\boldsymbol{x}}\cap\mathcal{K}$ is nonempty and compact. Moreover, we must have $|\boldsymbol{y}|>|\boldsymbol{x}|$. To see this, suppose instead that $|\boldsymbol{y}|\leq|\boldsymbol{x}|$. Since $\boldsymbol{y}$ was chosen to be the point of $\mathcal{R}_{\boldsymbol{x}}\cap\mathcal{K}$ farthest from the origin, this would imply that the ray $\{\rho \boldsymbol{x} : \rho \geq 1\}$ is disjoint from $\mathcal{K}$. Hence, this ray is contained in $\mathbb{R}^2\setminus\mathcal{K}$, is connected, contains $\boldsymbol{x}$, and is unbounded. It follows that the connected component of $\mathbb{R}^2\setminus\mathcal{K}$ containing $\boldsymbol{x}$ is unbounded, contradicting the assumption that $\boldsymbol{x} \in \hull(\mathcal{K})$. Therefore, $|\boldsymbol{y}|>|\boldsymbol{x}|$, and the desired inclusion can be deduced by observing that this implies $\boldsymbol{x}\in [\boldsymbol{0}:\boldsymbol{y}]\subset \strh(\mathcal{K})$.
\end{proof}


\section{Probabilistic preliminaries}\label{sec:preliminaries}

In this section we collect results on the main probabilistic tools used in our analysis: Brownian scaling and space-time transformations between Brownian motion and Brownian bridge in Section \ref{sec:scaling}, conformal invariance and time change in Section \ref{sec:conformal_invariance}, explicit formulas for the Laplace transform of quadratic functionals of Bessel bridges in Section \ref{sec:quad_functionals}, and some results involving the complementary error function in Section \ref{sec:erfc}.


\subsection{Scaling and space-time transformations}\label{sec:scaling}

The familiar scaling property of Brownian motion implies that if $(\boldsymbol{W}_t:t\geq 0)$ is planar Brownian motion starting at $\boldsymbol{0}$, then for any fixed $\rho>0$, the processes $\big(\frac{1}{\rho}\boldsymbol{W}_{\rho^2 t}:t\geq 0\big)$ and $(\boldsymbol{W}_t:t\geq 0)$ have the same distribution. In particular, for any $\rho>0$, the joint distribution of the first hitting time and place of the horizontal ray $\mathcal{R}_\rho$ defined by \eqref{eq:ray}, \eqref{eq:first_time}, and \eqref{eq:first_place} satisfies the distributional equality 
\begin{equation}\label{eq:rho_scaling}
 (T_\rho, X_\rho) \stackrel{d}{=} \left(\rho^2 T_1, \rho X_1\right).
\end{equation}
This allows us to focus our attention on deriving the joint law of $(T_1, X_1)$.

The space-time transformations between planar Brownian motion starting at $\boldsymbol{0}$ and planar Brownian bridge starting and ending at $\boldsymbol{0}$, which are given by
\begin{align}
(\boldsymbol{W}_t:t\geq 0)&\stackrel{d}{=}\mleft((1+t)\boldsymbol{B}_{\frac{t}{1+t}}:t\geq 0\mright),\nonumber \\
(\boldsymbol{B}_t:0\leq t<1)&\stackrel{d}{=}\mleft((1-t)\boldsymbol{W}_{\frac{t}{1-t}}:0\leq t<1\mright),\label{eq:space_time}
\end{align}
will come in handy when proving Theorem \ref{thm:radial_distance}. They can be easily deduced from their well-known one-dimensional counterparts; see Exercise 3.10 in Chapter I of \cite{Revuz_Yor}.


\subsection{Conformal invariance and time change}\label{sec:conformal_invariance}

Recall the definition of the horizontal ray $\mathcal{R}_\rho$ from \eqref{eq:ray}. With the distributional identity \eqref{eq:rho_scaling} in mind, we now turn our attention to deriving the joint distribution of the first \emph{exit} time and place of the slit plane $\mathbb{R}^2\setminus\mathcal{R}_1$. This will yield the distribution of $(T_1, X_1)$ while allowing us to exploit the conformal invariance of planar Brownian motion. The particular result that we use can be found in \cite[Theorem 7.20]{Morters_Peres} or \cite[Theorem 2.2]{Lawler}, which we state here for the convenience of the reader in a form tailored to our purposes. While the theorem is stated in terms of Brownian motion on the complex plane $\mathbb{C}$, it is straightforward to adapt the result to our setting of $\mathbb{R}^2$. Indeed, we often identify $\mathbb{R}^2$ with $\mathbb{C}$ in the obvious way when it is convenient. 

First we recall some definitions and notation before stating the result. A \emph{domain} is a nonempty connected open subset of $\mathbb{C}$, and a \emph{conformal map} is a mapping between domains that is both analytic and bijective. For any $z\in\mathbb{C}$, we denote by $\mathbb{P}_z$ the law under which the process $(Z_t:t\geq 0)$ is Brownian motion in the complex plane starting at $z$. Furthermore, the first exit time of a domain $\mathcal{U}\subset\mathbb{C}$ is defined by $\tau_\mathcal{U}:=\inf\{t\geq 0:Z_t\notin \mathcal{U}\}$, with the convention $\inf\varnothing=\infty$.

\begin{theorem*}
Let $\mathcal{U},\mathcal{V} \subset \mathbb{C}$ be domains, and let $f:\overline{\mathcal{U}}\to \overline{\mathcal{V}}$ be continuous and map $\mathcal{U}$ conformally onto $\mathcal{V}$. Then for any $z\in\mathcal{V}$, we have the distributional equality
\begin{equation}\label{eq:conformal_invariance}
\left(\tau_\mathcal{V},Z_{\tau_\mathcal{V}}\right)\text{under}~\mathbb{P}_z\stackrel{d}{=}\left(\int_0^{\tau_\mathcal{U}} \big|f'(Z_t)\big|^2\D{t},\,f(Z_{\tau_\mathcal{U}})\right)\text{under}~\mathbb{P}_{f^{-1}(z)}.
\end{equation}
\end{theorem*}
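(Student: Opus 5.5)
The plan is to derive this from Lévy's conformal invariance, which is a consequence of Itô's formula and the Dambis--Dubins--Schwarz theorem, and then to match exit times using the boundary behaviour of $f$.

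\textbf{Time change on $\mathcal{U}$.} Start $(Z_t)$ at $w:=f^{-1}(z)\in\mathcal{U}$ under $\mathbb{P}_w$, and set $M_t:=f(Z_{t\wedge\tau_\mathcal{U}})$.
\begin{enumerate}[label=(\alph*)]
\item Write $f=u+iv$. Since $u$ and $v$ are harmonic on $\mathcal{U}$, Itô's formula (applied on an exhausting sequence of compact subdomains, then letting them increase) shows that $\mathrm{Re}\,M$ and $\mathrm{Im}\,M$ are continuous local martingales on $[0,\tau_\mathcal{U})$.
\item By the Cauchy--Riemann equations, $|\nabla u|^2=|\nabla v|^2=|f'|^2$ and $\nabla u\cdot\nabla v=0$. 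Hence both components have quadratic variation $\sigma(t):=\int_0^{t\wedge\tau_\mathcal{U}}|f'(Z_s)|^2\D{s}$ and their covariation vanishes.
\item The two-dimensional Dambis--Dubins--Schwarz (Knight) theorem then gives a planar Brownian motion $\widetilde Z$ started at $z$ with $M_t=\widetilde Z_{\sigma(t)}$ for $t<\tau_\mathcal{U}$. On the event $\sigma(\tau_\mathcal{U})<\infty$ the probability space may need enlarging by an independent Brownian motion to continue $\widetilde Z$ past $\sigma(\tau_\mathcal{U})$.
\end{enumerate}

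\textbf{Matching the exit times.} I want to show $\tau_\mathcal{V}(\widetilde Z)=\sigma(\tau_\mathcal{U})$ and $\widetilde Z_{\sigma(\tau_\mathcal{U})}=f(Z_{\tau_\mathcal{U}})$.
\begin{enumerate}[label=(\alph*)]
\item For $s<\sigma(\tau_\mathcal{U})$ we have $\widetilde Z_s=f(Z_t)\in\mathcal{V}$ for some $t<\tau_\mathcal{U}$, so $\widetilde Z$ has not yet left $\mathcal{V}$.
\item At $s=\sigma(\tau_\mathcal{U})$, continuity of $f$ on $\overline{\mathcal{U}}$ gives $\widetilde Z_s=\lim_{t\uparrow\tau_\mathcal{U}}f(Z_t)=f(Z_{\tau_\mathcal{U}})$.
\item It remains to check that $f(\partial\mathcal{U})\subset\partial\mathcal{V}$. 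Clearly $f(\partial\mathcal{U})\subset\overline{\mathcal{V}}$. Suppose $x\in\partial\mathcal{U}$ had $f(x)\in\mathcal{V}$, and let $y:=f^{-1}(f(x))\in\mathcal{U}$, so $y\neq x$. Points of $\mathcal{U}$ close to $x$ map close to $f(x)$. Because $f^{-1}:\mathcal{V}\to\mathcal{U}$ is continuous (inverse of a conformal map), those points must then lie close to $y$. Since $x\neq y$ this is a contradiction.
\end{enumerate}
Thus $\widetilde Z$ first leaves $\mathcal{V}$ exactly at time $\sigma(\tau_\mathcal{U})$, at the point $f(Z_{\tau_\mathcal{U}})$. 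Since $\widetilde Z$ has law $\mathbb{P}_z$, this gives \eqref{eq:conformal_invariance}.

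\textbf{Main obstacle: infinite exit times.} The delicate part is the case $\tau_\mathcal{U}=\infty$ or $\sigma(\tau_\mathcal{U})=\infty$.
\begin{enumerate}[label=(\alph*)]
\item If $\sigma(\tau_\mathcal{U})=\infty$, then $\widetilde Z$ stays in $\mathcal{V}$ forever, so $\tau_\mathcal{V}=\infty$ and both sides of \eqref{eq:conformal_invariance} are infinite.
\item If $\tau_\mathcal{U}=\infty$ but $\sigma(\infty)<\infty$, the local martingale $M$ converges to some limit $\widetilde Z_{\sigma(\infty)}$. I would try to rule this out, or identify the limit, by arguing that a limit inside $\mathcal{V}$ is incompatible with $Z$ being recurrent or transient inside $\mathcal{U}$.
\end{enumerate}
Since this case looks delicate in general, I would restrict to what the paper needs: the slit plane $\mathbb{C}\setminus\mathcal{R}_1$, where $\tau_\mathcal{U}<\infty$ almost surely because planar Brownian motion hits the ray. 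It then suffices to check that the image domain also has an almost surely finite exit time, which holds for the half-plane or any domain with nonpolar complement.
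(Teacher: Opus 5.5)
The paper gives no proof of this statement; it quotes it from M\"orters--Peres (Theorem 7.20) and Lawler (Theorem 2.2). Your route is the standard argument behind those references: It\^o's formula plus Cauchy--Riemann give two orthogonal local martingales with common quadratic variation $\sigma$, Knight's theorem produces $\widetilde Z$, and the boundary correspondence identifies the exit time and place. Your proof that $f(\partial\mathcal{U})\subset\partial\mathcal{V}$, via continuity of $f^{-1}$, is correct. It is also the extra ingredient the paper's version needs, since that version includes the exit place.

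\textbf{The gap.} As written, you do not prove the statement for general $\mathcal{U}$, because the event $\{\tau_\mathcal{U}=\infty,\ \sigma(\infty)<\infty\}$ is left open. It closes in one line by recurrence. Since $f$ is conformal, $f'\neq 0$, so $|f'|\geq c>0$ on some closed disk $\overline{\mathcal{D}}\subset\mathcal{U}$ around $f^{-1}(z)$. Planar Brownian motion spends infinite total time in $\mathcal{D}$ almost surely, so $\sigma(\infty)=\infty$ almost surely on $\{\tau_\mathcal{U}=\infty\}$.

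The other mixed event $\{\tau_\mathcal{U}<\infty,\ \sigma(\tau_\mathcal{U})=\infty\}$ is also null. On it, continuity of $f$ on $\overline{\mathcal{U}}$ forces $\widetilde Z_s\to f(Z_{\tau_\mathcal{U}})$ as $s\to\infty$, which planar Brownian motion almost surely never does. Hence, almost surely,
\[
\{\tau_\mathcal{U}<\infty\}=\{\sigma(\tau_\mathcal{U})<\infty\}=\{\tau_\mathcal{V}<\infty\},
\]
and your matching argument then gives the full statement with no restriction.

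\textbf{Your fallback.} You swapped the roles of the domains. In the paper $\mathcal{U}=\mathcal{H}$, $\mathcal{V}=\mathbb{R}^2\setminus\mathcal{R}_1$ and $f(z)=z^2+1$, not the slit plane mapped onto the half-plane. What your argument needs is:
\begin{itemize}
\item $\tau_\mathcal{H}<\infty$ almost surely under $\mathbb{P}_{(0,1)}$, which holds because it is a one-dimensional first passage time;
\item $\tau_\mathcal{V}<\infty$ almost surely, which holds because the ray is nonpolar and which forces $\sigma(\tau_\mathcal{H})<\infty$.
\end{itemize}
Both facts hold, so the restricted version does cover the paper's application. The recurrence argument above makes the restriction unnecessary.
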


Our strategy will be to use the equality in distribution \eqref{eq:conformal_invariance} to express the sought-after first exit time and place of $\mathcal{V}=\mathbb{R}^2\setminus\mathcal{R}_1$ in terms of planar Brownian motion run in a simpler domain $\mathcal{U}$ whose geometry allows us to fully exploit the independence of the coordinates of $(\boldsymbol{W}_t:t\geq 0)$, namely, the upper half-plane $\mathcal{H}:=\{(x,y)\in \mathbb{R}^2:y>0\}$. The reader can check that, when written as a function of a complex variable $z$, a suitable choice for the conformal map $f:\mathcal{H}\to \mathbb{R}^2\setminus\mathcal{R}_1$ is given by $f(z)=z^2+1$. In particular, $|f'(x+iy)|^2=4(x^2+y^2)$, and the preimage of the origin in $\mathbb{R}^2\setminus\mathcal{R}_1$ is the point $(0,1)$ in $\mathcal{H}$. Now we can link the distribution of $(T_1,X_1)$ with planar Brownian motion starting at the point $(0,1)\in \mathcal{H}$ by
\begin{equation}\label{eq:half_plane}
(T_1,X_1)\stackrel{d}{=}\left(4\int_0^{\tau_\mathcal{H}}\big(W_t^{(1)}\big)^2+\big(W_t^{(2)}\big)^2\D{t},\,\big(W_{\tau_\mathcal{H}}^{(1)}\big)^2+1\right)\text{under}~\mathbb{P}_{(0,1)}.
\end{equation}

At this juncture, the astute reader is sure to notice that under $\mathbb{P}_{(0,1)}$, the random variable $\tau_\mathcal{H}$ appearing on the right-hand side of \eqref{eq:half_plane} is simply the first passage time to the level $1$ of one-dimensional Brownian motion starting at $0$, and that conditioning on $\tau_\mathcal{H}$ makes $W_{\tau_\mathcal{H}}^{(1)}$ a normal random variable with mean $0$ and variance $\tau_\mathcal{H}$. Furthermore, after conditioning on both $\tau_\mathcal{H}$ and $W_{\tau_\mathcal{H}}^{(1)}$, the processes $\big((W_t^{(1)})^2:0\leq t\leq \tau_\mathcal{H}\big)$ and $\big((W_t^{(2)})^2:0\leq t\leq \tau_\mathcal{H}\big)$ become independent squared Bessel bridges of dimension $1$ and $3$, respectively. These observations take us to the next section, where we recall some explicit formulas for Laplace transforms of quadratic functionals of Bessel bridges.


\subsection{Quadratic functionals of Bessel bridges}\label{sec:quad_functionals}

Here we recall the rudiments of \emph{Bessel}$(\delta)$ \emph{bridges} of dimension $\delta=1$ and $\delta=3$ that will be needed to state the Laplace transform identities that were alluded to in the paragraph immediately following \eqref{eq:half_plane}. Loosely speaking, a Bessel$(1)$ bridge from $x\geq 0$ to $y\geq 0$ of duration $t>0$ is simply the absolute value of one-dimensional Brownian motion starting at $x$ and conditioned to be at $y$ at time $t$. In other words, a Bessel$(1)$ bridge is just the absolute value of an ordinary Brownian bridge with the same parameters. On the other hand, a Bessel$(3)$ bridge from $x\geq 0$ to $y\geq 0$ of duration $t>0$ is a Brownian bridge from $x$ to $y$ of duration $t$ whose path is conditioned to remain nonnegative over the entire time interval $[0,t]$. A Bessel$(3)$ bridge from $0$ to $0$ of duration $1$ is also known as a \emph{normalized Brownian excursion}. 

Brownian motion being at a specific point at a specific time is a null event, as is the bridge remaining nonnegative when either the start or endpoint is $0$, so both of these conditionings require some care in order to make our informal definitions precise; see Chapter VI, Section 3 and Chapter XI, Sections 1 and 3 in \cite{Revuz_Yor} for more details. It is worth pointing out that a Bessel$(3)$ bridge will almost surely be \emph{strictly positive} except possibly at the start or endpoint. This should be contrasted with the fact that a Bessel$(1)$ bridge hits $0$ with positive probability regardless of the start or endpoint. Indeed, if a Bessel$(1)$ bridge hits $0$ at all, then it does so uncountably many times. 

For $\delta>0$ and $x,y\geq 0$, squaring the coordinate process of a Bessel$(\delta)$ bridge from $\sqrt{x}$ to $\sqrt{y}$ of duration $t>0$ results in a \emph{squared Bessel}$(\delta)$ \emph{bridge} from $x$ to $y$ of duration $t>0$. To make this statement more precise, let $\mathbb{P}_{x,y}^{\delta,t}$ and $\mathbb{Q}_{x,y}^{\delta,t}$ denote, respectively, the laws under which the coordinate process $(X_s:0\leq s\leq t)$ is a Bessel$(\delta)$ and squared Bessel$(\delta)$ bridge from $x$ to $y$ of duration $t$. Then we have
\[
\mleft(X_s:0\leq s\leq t\mright)~\text{under}~\mathbb{Q}_{x,y}^{\delta,t}\stackrel{d}{=}\mleft(X^2_s:0\leq s\leq t\mright)~\text{under}~\mathbb{P}_{\sqrt{x},\sqrt{y}}^{\delta,t}.
\]

With these definitions in place, we can now state the formula from which our needed Laplace transform identities will be derived. This result is taken from Corollary 3.3 in Chapter XI of \cite{Revuz_Yor}.

\begin{theorem*}
For any $x,y\geq 0$ and $\delta>0$, let $\mathbb{Q}_{x,y}^{\delta,1}$ denote the law and expectation operator under which the coordinate process $(X_s:0 \leq s \leq 1)$ is a squared Bessel$(\delta)$ bridge from $x$ to $y$ of duration $1$. Then for any $b\geq 0$, we have 
\begin{equation}\label{eq:RY_identity}
\begin{split}
\mathbb{Q}_{x,0}^{\delta,1}\left[\exp\left(-\frac{b^2}{2} \int_0^1 X_s\D{s}\right)\right]&=\mathbb{Q}_{0,x}^{\delta,1}\left[\exp\left(-\frac{b^2}{2} \int_0^1 X_s\D{s}\right)\right]\\
&=\left(\frac{b}{\sinh b}\right)^\frac{\delta}{2}\exp\left(\frac{x}{2}(1-b\coth b)\right).
\end{split}
\end{equation}
\end{theorem*}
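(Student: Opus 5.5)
The plan is to prove the identity by a Girsanov change of measure that turns the squared Bessel$(\delta)$ process into a squared Ornstein--Uhlenbeck process, whose transition density is explicit by a time change of squared Bessel. Disintegrating at the endpoint then reduces the bridge Laplace transform to a ratio of two transition densities, which I evaluate in the limit $y\searrow 0$. I take $b>0$ throughout; the case $b=0$ is trivial.

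\textbf{Reduction by time reversal.} The first equality in \eqref{eq:RY_identity} holds because the squared Bessel$(\delta)$ bridge from $x$ to $0$ of duration $1$, reversed in time, is the squared Bessel$(\delta)$ bridge from $0$ to $x$. This follows from the symmetry $y^{-\nu}q_t(x,y)\,\D{y}$-reversibility of the BESQ semigroup, i.e.\ $x^{\nu}$-weighted symmetry of its densities, and the functional $\int_0^1 X_s\D{s}$ is invariant under reversal. So it suffices to treat $\mathbb{Q}_{x,0}^{\delta,1}$.

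\textbf{Girsanov step.} Let $\mathbb{Q}^\delta_x$ denote the law of the unconditioned BESQ$(\delta)$ process, solving $\D{X}_t=2\sqrt{X_t}\,\D{\beta_t}+\delta\D{t}$. By It\^o's formula,
\[
D_t:=\exp\mleft(-\tfrac{b}{2}(X_t-x-\delta t)-\tfrac{b^2}{2}\int_0^t X_s\D{s}\mright)
\]
is the stochastic exponential of $-b\int_0^t\sqrt{X_s}\,\D{\beta_s}$. I will check that it is a true martingale, for instance via Novikov on short intervals or a localization argument. Under $D_t\cdot\mathbb{Q}^\delta_x$, the process $X$ solves $\D{X}=2\sqrt{X}\,\D{\tilde\beta}+(\delta-2bX)\D{t}$, which is a squared OU process. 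Conditioning on $X_1=y$ gives
\[
\mathbb{Q}^{\delta,1}_{x,y}\mleft[e^{-\frac{b^2}{2}\int_0^1X_s\D{s}}\mright]=e^{\frac{b}{2}(y-x-\delta)}\,\frac{p^b_1(x,y)}{q_1(x,y)},
\]
where $q_t$ is the BESQ$(\delta)$ transition density and $p^b_t$ is the squared OU density.

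\textbf{Time change and limit.} The squared OU process can be written as $X_t=e^{-2bt}Y_{s(t)}$ with $s(t)=\frac{e^{2bt}-1}{2b}$ and $Y$ a BESQ$(\delta)$ process from $x$. Hence
\[
p^b_1(x,y)=e^{2b}\,q_{s}(x,ye^{2b}),\qquad s=\frac{e^{2b}-1}{2b}.
\]
I then use the explicit density
\[
q_t(x,y)=\frac{1}{2t}\mleft(\frac{y}{x}\mright)^{\nu/2}e^{-\frac{x+y}{2t}}\,I_\nu\mleft(\frac{\sqrt{xy}}{t}\mright),\qquad \nu=\frac{\delta}{2}-1,
\]
together with $I_\nu(z)\sim (z/2)^\nu/\Gamma(\nu+1)$ as $z\searrow 0$, and let $y\searrow 0$. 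The ratio tends to
\[
\frac{e^{2b}\,q_s(x,ye^{2b})}{q_1(x,y)}\longrightarrow e^{b\delta}s^{-\delta/2}\exp\mleft(\frac{x}{2}-\frac{x}{2s}\mright).
\]
Multiplying by $e^{-\frac{b}{2}(x+\delta)}$ gives two factors. The $\delta$-part is $(e^b/s)^{\delta/2}=(b/\sinh b)^{\delta/2}$. The $x$-part is
\[
\exp\mleft(\frac{x}{2}\mleft(1-b-\frac{2b}{e^{2b}-1}\mright)\mright)=\exp\mleft(\frac{x}{2}(1-b\coth b)\mright).
\]
Together these give the claimed formula.

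\textbf{Main obstacle.} The delicate point is justifying that the bridge from $x$ to $0$ is the weak limit of the bridges from $x$ to $y$ as $y\searrow 0$, and that the Laplace transform passes to this limit. For $\delta\ge 2$ the point $0$ is polar, so the bridge to $0$ is itself defined by such a limit or by an $h$-transform. I would handle this using the continuity in $y$ of the bridge laws, as in Revuz--Yor Ch.~XI \S3, together with boundedness of the integrand $e^{-\frac{b^2}{2}\int_0^1 X_s\D{s}}\le 1$. Alternatively, one can work directly with the $h$-transform description of the bridge, for which the same density-ratio computation applies verbatim.
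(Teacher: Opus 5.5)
The paper does not prove this identity; it imports it from Revuz--Yor (Ch.~XI, Cor.~3.3) and uses it as a black box. Your argument is a correct, self-contained derivation along the classical Girsanov/squared Ornstein--Uhlenbeck lines, and the computations check out:
\begin{itemize}
\item the density-ratio formula $e^{\frac b2(y-x-\delta)}p^b_1(x,y)/q_1(x,y)$;
\item the time change $p^b_1(x,y)=e^{2b}q_s(x,ye^{2b})$;
\item the limit $(e^b/s)^{\nu}$ of the $I_\nu$-ratio, which is valid because $\nu>-1$;
\item the identities $e^b/s=b/\sinh b$ and $b+\frac{2b}{e^{2b}-1}=b\coth b$.
\end{itemize}

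What the citation buys the paper is exactly the part you flag as delicate. That is the construction of the bridge family and its time reversal. It also includes the bridges' weak continuity in the endpoints. This continuity is needed to pass from a.e.\ $y$ to every $y$. It is also needed to make sense of the bridge to the polar point $0$ when $\delta\ge 2$, which includes the case $\delta=3$ used later in the paper. Your proof leans on the same facts but makes the mechanism explicit.

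Two simplifications are available.
\begin{itemize}
\item Since $b\ge 0$ and $X\ge 0$, you have $0<D_t\le e^{\frac b2(x+\delta)}$ for $t\le 1$. So $D$ is a bounded local martingale, hence a true martingale, and no Novikov argument is needed.
\item You can bypass the Bessel asymptotics by treating $\mathbb{Q}^{\delta,1}_{0,x}$ directly with the gamma density $q_t(0,y)=y^{\delta/2-1}e^{-y/(2t)}/\big((2t)^{\delta/2}\Gamma(\delta/2)\big)$. The quantity $e^{\frac b2(x-\delta)}\,e^{2b}q_s(0,xe^{2b})/q_1(0,x)$ is exactly $(b/\sinh b)^{\delta/2}\exp\big(\frac x2(1-b\coth b)\big)$. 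Time reversal then gives the $x\to 0$ bridge.
\end{itemize}

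Your write-up leaves the case $x=0$ implicit. It follows in either version by letting the relevant endpoint tend to $0$.

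Finally, the reversing measure of BESQ$(\delta)$ is $y^{\nu}\,\D{y}$, i.e.\ $q_t(x,y)/y^{\nu}$ is symmetric in $(x,y)$. Your phrasing of this is garbled, although the identity you intend, $x^{\nu}q_t(x,y)=y^{\nu}q_t(y,x)$, is correct.
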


The identity \eqref{eq:RY_identity} needs to be generalized so that it applies to bridges of arbitrary duration $t>0$. Fortunately, Bessel bridges inherit the scaling property of Brownian motion in the sense that a bridge of duration $1$ can be scaled longer or shorter to produce a bridge of any desired duration with an appropriately scaled start and endpoint. This property leads to the following distributional identities for both types of bridges which hold for any $\delta, t>0$ and $x,y\geq 0$
\begin{align}
\left(X_s:0\leq s\leq t\right)\text{under}~\mathbb{P}_{x,y}^{\delta,t}&\stackrel{d}{=}\left(\sqrt{t}X_{s/t}:0\leq s\leq t\right)\text{under}~\mathbb{P}_{\frac{x}{\sqrt{t}},\frac{y}{\sqrt{t}}}^{\delta,1},\nonumber \\
\left(X_s:0\leq s\leq t\right)\text{under}~\mathbb{Q}_{x,y}^{\delta,t}&\stackrel{d}{=}\left(t\,X_{s/t}:0\leq s\leq t\right)\text{under}~\mathbb{Q}_{\frac{x}{t},\frac{y}{t}}^{\delta,1}.\label{eq:bridge_scaling}
\end{align}

In particular, for any $b\geq 0$, we can use \eqref{eq:bridge_scaling}, the change of variables $s\mapsto tu$, and \eqref{eq:RY_identity} to write
\begin{align*}
\mathbb{Q}_{x,0}^{\delta,t}\left[\exp\left(-\frac{b^2}{2} \int_0^t X_s\D{s}\right)\right]
&=\mathbb{Q}_{\frac{x}{t},0}^{\delta,1}\left[\exp\left(-\frac{b^2}{2}\int_0^t t\,X_{s/t}\D{s}\right)\right]\\
&=\mathbb{Q}_{\frac{x}{t},0}^{\delta,1}\left[\exp\left(-\frac{(bt)^2}{2}\int_0^1 X_u\D{u}\right)\right]\\
&=\left(\frac{bt}{\sinh(bt)}\right)^\frac{\delta}{2}\exp\left(\frac{x}{2t}\big(1-bt\coth(bt)\big)\right).
\end{align*}
Note that the same calculation can be done for the bridge from $0$ to $x$. Hence, for any $\delta,t>0$ and $b,x\geq 0$, we are led to the desired Laplace transform identity
\begin{equation}\label{eq:bridge_formula}
\begin{split}
&\mathbb{Q}_{x,0}^{\delta,t}\left[\exp\left(-\frac{b^2}{2} \int_0^t X_s\D{s}\right)\right]=\mathbb{Q}_{0,x}^{\delta,t}\left[\exp\left(-\frac{b^2}{2} \int_0^t X_s\D{s}\right)\right]\\
&=\left(\frac{bt}{\sinh(bt)}\right)^\frac{\delta}{2}\exp\left(\frac{x}{2t}\big(1-bt\coth(bt)\big)\right).
\end{split}
\end{equation}


\subsection{Complementary error function results}\label{sec:erfc}

The complementary error function can be interpreted as the right-tail of a half-normal random variable with variance $\frac{1}{2}$. More precisely, if $Z$ denotes a standard normal random variable, then we have
\begin{align}
\erfc(x)=\frac{2}{\sqrt{\pi}}\int_x^\infty e^{-u^2}\D{u}&=2\frac{1}{\sqrt{2\pi\frac{1}{2}}}\int_x^\infty e^{-\frac{u^2}{2\frac{1}{2}}}\D{u}\nonumber \\
&=\mathbb{P}\mleft(\mleft|\frac{Z}{\sqrt{2}}\mright|\geq x\mright).\label{eq:erfc_Z}
\end{align}
This representation can be used to compute the negative fractional moments of a nonnegative random variable whose Laplace transform features the complementary error function in a particular way. This is done by establishing a connection to the well-known moments of the half-normal distribution 
\begin{equation}\label{eq:half_normal}
\mathbb{E}\big[|Z|^p\big]=\sqrt{\frac{2^p}{\pi}}\Gamma\mleft(\frac{p}{2}+\frac{1}{2}\mright),~~p>-1;
\end{equation}
see, for instance, Equation (3.9) of \cite{Janson}.

\begin{lemma}\label{lem:erfc_moments}
Let $c>0$ and $0<\alpha\leq \frac{1}{2}$ be constants. Then there exists a nonnegative random variable $X$ whose Laplace transform is given by
\begin{equation}\label{eq:erfc_Laplace}
\mathbb{E}\mleft[e^{-s X}\mright]=\erfc(c\, s^\alpha),~~s\geq 0.
\end{equation}
In Janson's terminology \cite{Janson}, $X$ has ``moments of gamma type'' given by the formula
\begin{equation}\label{eq:erfc_moments}
\mathbb{E}\mleft[X^{-p}\mright]=\frac{\Gamma\mleft(\frac{p}{2\alpha}+\frac{1}{2}\mright)}{c^{p/\alpha}\sqrt{\pi}\,\Gamma(p+1)},~~ p>-\alpha.
\end{equation}
\end{lemma}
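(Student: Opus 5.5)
Two things have to be shown: that $s\mapsto\erfc(c\,s^\alpha)$ is the Laplace transform of some nonnegative $X$, and that this $X$ has the stated negative moments.

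\emph{Existence.} By Bernstein's theorem, it suffices to show that $\phi(s)=\erfc(c\,s^\alpha)$ is completely monotone with $\phi(0)=1$; the normalization gives a probability measure. I will show $\phi$ is a composition $g\circ h$ of a completely monotone $g$ with a Bernstein function $h$, which is completely monotone by the standard criterion.
\begin{itemize}[label=$\circ$]
\item Take $h(s)=s^{2\alpha}$. Since $0<2\alpha\le 1$, this is a Bernstein function (nonnegative with completely monotone derivative).
\item Take $g(u)=\erfc(c\sqrt{u})$. Then
\[
g'(u)=-\frac{c}{\sqrt{\pi u}}\,e^{-c^2u},
\]
so $-g'$ is a product of the completely monotone functions $u^{-1/2}$ and $e^{-c^2u}$, hence completely monotone. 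As $g\ge 0$, $g$ itself is completely monotone.
\end{itemize}
Therefore $\phi=g\circ h$ is completely monotone, and $\phi(0)=\erfc(0)=1$. This gives a random variable $X\ge 0$ satisfying \eqref{eq:erfc_Laplace}. Since $\phi(s)\to 0$ as $s\to\infty$, $X$ has no atom at $0$.

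\emph{Moments.} For $p>0$, use the gamma integral
\[
X^{-p}=\frac{1}{\Gamma(p)}\int_0^\infty s^{p-1}e^{-sX}\D{s}.
\]
Taking expectations, applying Tonelli, and inserting \eqref{eq:erfc_Z} gives
\[
\mathbb{E}[X^{-p}]=\frac{1}{\Gamma(p)}\int_0^\infty s^{p-1}\,\mathbb{P}\mleft(|Z|/\sqrt2\ge c s^\alpha\mright)\D{s}.
\]
The event equals $\{s\le (|Z|/(\sqrt2 c))^{1/\alpha}\}$. Integrating in $s$ first (Tonelli again) gives
\[
\mathbb{E}[X^{-p}]=\frac{1}{p\,\Gamma(p)}\,\mathbb{E}\mleft[\mleft(\frac{|Z|}{\sqrt2\,c}\mright)^{p/\alpha}\mright].
\]
Now apply \eqref{eq:half_normal} with exponent $p/\alpha$. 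The factors $2^{p/(2\alpha)}$ from $\sqrt{2^{p/\alpha}}$ and from $(\sqrt2)^{-p/\alpha}$ cancel, and $p\,\Gamma(p)=\Gamma(p+1)$, which yields exactly \eqref{eq:erfc_moments}.

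\emph{Extension to $-\alpha<p\le 0$.} The case $p=0$ is immediate. For the remaining range, both sides of \eqref{eq:erfc_moments} are analytic in $p$ on the strip $\mathrm{Re}\,p>-\alpha$:
\begin{itemize}[label=$\circ$]
\item The left side $\mathbb{E}[X^{-p}]$ is analytic wherever it is finite. The right side is finite precisely when $\frac{p}{2\alpha}+\frac12>0$, i.e.\ $p>-\alpha$.
\item Since $X^{-p}\ge 0$ and the identity already holds for $p>0$, a standard analytic-continuation argument for Mellin transforms of positive measures applies. The abscissa of convergence of $\mathbb{E}[X^{-p}]$ must sit at a singularity of the continued function, and the right side has none until $p=-\alpha$.
\end{itemize}
So finiteness and equality both extend to $p>-\alpha$.

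As a fallback, I could compute $\mathbb{E}[X^q]$ for $0<q<\alpha$ directly. Use
\[
X^q=\frac{q}{\Gamma(1-q)}\int_0^\infty\bigl(1-e^{-sX}\bigr)s^{-q-1}\D{s},
\]
which with \eqref{eq:erfc_Z} gives $1-\phi(s)=\mathbb{P}\mleft(|Z|/\sqrt2< c s^\alpha\mright)$ and reduces to a half-normal moment in the same way. This is the step I expect to be the main obstacle: the analytic-continuation justification, or equivalently the bookkeeping in this positive-$q$ formula.
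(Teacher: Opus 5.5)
Your proposal is correct and follows the paper's proof essentially step for step: existence via a completely monotone function composed with a Bernstein function plus Bernstein's theorem, the gamma-integral/Tonelli/half-normal-tail computation for $p>0$, and analytic continuation for $-\alpha<p\le 0$ (the paper cites Janson's Theorem 2.1, which is the same singularity-at-the-abscissa argument you sketch). Your fallback, which you flagged as the main obstacle, also goes through without difficulty: it gives $\mathbb{E}[X^q]=\Gamma(1-q)^{-1}\,\mathbb{E}\bigl[(|Z|/(\sqrt{2}\,c))^{-q/\alpha}\bigr]$, which is finite exactly when $q<\alpha$ and agrees with the stated formula at $p=-q$, so it removes the need for analytic continuation altogether.
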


\begin{proof}
To see why such a random variable $X$ exists, first note that $s\mapsto\erfc(\sqrt{s})$ is \emph{completely monotone} and that $s\mapsto c^2 s^{2\alpha}$ is a \emph{Bernstein function}; see Chapters 1 and 3 of \cite{Bernstein} for the definitions of these properties. Moreover, by Theorem 3.7 of the same reference, we have that a completely monotone function composed with a Bernstein function is itself completely monotone. Now it follows from Bernstein's theorem \cite[Theorem 1.4]{Bernstein} that $s\mapsto\erfc(c\, s^\alpha)$ is the Laplace transform of a positive measure on $[0,\infty)$ with total mass equal to $\erfc(0)=1$.

Now that we have established the existence of the random variable $X$, the proof of the moment formula boils down to two applications of Tonelli's theorem along with analytic continuation. We start by proving the formula \eqref{eq:erfc_moments} for $p>0$. With this assumption, the first application of Tonelli's theorem results in
\begin{equation}\label{eq:Laplace_trick}
\int_0^\infty s^{p-1}\mathbb{E}\mleft[e^{-s X}\mright]\D{s}=\mathbb{E}\mleft[\int_0^\infty s^{p-1}e^{-s X}\D{s}\mright]=\Gamma(p)\,\mathbb{E}\mleft[X^{-p}\mright].
\end{equation}
After rearranging \eqref{eq:Laplace_trick}, we can use the hypothesis \eqref{eq:erfc_Laplace}, the half-normal representation \eqref{eq:erfc_Z}, and the fact that $s\mapsto c\, s^\alpha$ is strictly increasing to arrive at
\begin{align}
\mathbb{E}\mleft[X^{-p}\mright]&=\frac{1}{\Gamma(p)}\int_0^\infty s^{p-1}\erfc(c\, s^\alpha)\D{s}\nonumber \\
&=\frac{1}{p\,\Gamma(p)}\int_0^\infty p\,s^{p-1}\mathbb{P}\mleft(\mleft|\frac{Z}{c\sqrt{2}}\mright|^{1/\alpha}\geq s\mright)\D{s}.\label{eq:tail_integral1}
\end{align}
Now the integral appearing in \eqref{eq:tail_integral1} can be rewritten using Tonelli's theorem as
\begin{align}
\mathbb{E}\mleft[\int_0^\infty p\,s^{p-1}\mathbbm{1}_{\mleft\{\mleft|\frac{Z}{c\sqrt{2}}\mright|^{1/\alpha}\geq s\mright\}}\D{s}\mright]&=\mathbb{E}\mleft[\int_0^{\mleft|\frac{Z}{c\sqrt{2}}\mright|^{1/\alpha}} p\,s^{p-1}\D{s}\mright]\nonumber \\
&=\mathbb{E}\mleft[\mleft|\frac{Z}{c\sqrt{2}}\mright|^{p/\alpha}\mright].\label{eq:tail_integral2}
\end{align}
Substituting \eqref{eq:tail_integral2} into \eqref{eq:tail_integral1} while using \eqref{eq:half_normal} and the recurrence for $\Gamma$ leads to
\begin{align}
\mathbb{E}\mleft[X^{-p}\mright]&=\frac{1}{p\,\Gamma(p)}\frac{1}{c^{p/\alpha}\sqrt{2^{p/\alpha}}}\sqrt{\frac{2^{p/\alpha}}{\pi}}\Gamma\mleft(\frac{p}{2\alpha}+\frac{1}{2}\mright)\nonumber \\
&=\frac{\Gamma\mleft(\frac{p}{2\alpha}+\frac{1}{2}\mright)}{c^{p/\alpha}\sqrt{\pi}\,\Gamma(p+1)},~~ p>0.\label{eq:positive_moments}
\end{align}

Lastly, we extend the validity of the moment formula \eqref{eq:positive_moments} from $p>0$ to $p>-\alpha$. This can be done using an analytic continuation argument since the right-hand side of \eqref{eq:positive_moments} as a function of $p\in\mathbb{C}$ is analytic in the right half-plane $\{p\in\mathbb{C}:\Re p>-\alpha\}$; see Theorem 2.1 of \cite{Janson} for a precise statement and proof.
\end{proof}


\section{Proofs of the main results}

\subsection{Proof of Theorem \ref{thm:joint_Laplace}}\label{sec:joint_Laplace}

Here we prove Theorem \ref{thm:joint_Laplace} by combining the results noted in Subsections \ref{sec:scaling}, \ref{sec:conformal_invariance}, and \ref{sec:quad_functionals}. We start by using the distributional identity \eqref{eq:half_plane} to write
\begin{align}
&\mathbb{E}\left[\exp\left(-\frac{b^2}{8} T_1 - \mu X_1\right)\right]\nonumber \\
&=\mathbb{E}\left[\exp\left(-\frac{b^2}{2}\int_0^\tau \big(W_t^{(1)}\big)^2+\big(W_t^{(2)}\big)^2\D{t}-\mu\left(\big(W_\tau^{(1)}\big)^2+1\right)\right)\right],\label{eq:Laplace_expectation}
\end{align}
where $b,\mu\geq 0$ are constants, $(W_t^{(i)}:t\geq 0)$, $i=1,2$, are independent one-dimensional Brownian motions with $W_0^{(1)}=0$ and $W_0^{(2)}=1$, and $\tau$ is the first passage time to $0$ of the second Brownian motion. It is well known that conditioning on $\tau$ makes the process $\big((W_t^{(2)})^2:0\leq t\leq \tau\big)$ a squared Bessel$(3)$ bridge from $1$ to $0$ of duration $\tau$; see the introduction of \cite{first_passage}. Moreover, since the two Brownian motions are independent, $W_\tau^{(1)}$ is a normal random variable with mean $0$ and variance $\tau$ when conditioned on $\tau$. If we further condition on the value of $W_\tau^{(1)}$, then $\big((W_t^{(1)})^2:0\leq t\leq \tau\big)$ becomes a squared Bessel$(1)$ bridge from $0$ to $(W_\tau^{(1)})^2$, also of duration $\tau$. 

The conditional relationship between $W_\tau^{(1)}$ and $\tau$ that we just described, when coupled with the well-known density for $\tau$, leads to the joint density
\begin{align}
\mathbb{P}\left(W_\tau^{(1)}\in \D{x},\, \tau\in \D{t}\right)&=\frac{1}{\sqrt{2\pi t}}e^{-\frac{x^2}{2t}}\D{x}\frac{1}{\sqrt{2\pi t^3}}e^{-\frac{1}{2t}}\D{t}\nonumber \\
&=\frac{1}{2\pi t^2}e^{-\frac{1+x^2}{2t}}\D{x}\D{t},~~x\in\mathbb{R},\,t>0.\label{eq:mixing_density}
\end{align}
We will use this joint density to disintegrate \`{a} la \cite{bridges} the path measure under which the expectation on the right-hand side of \eqref{eq:Laplace_expectation} is taken, thereby expressing it as a mixture of integral functionals of squared Bessel bridges. 

Towards this end, we introduce the following abbreviated notation for the Laplace transforms from Section \ref{sec:quad_functionals}, where $b,x\geq 0$ and $t>0$, namely
\begin{align*}
\Psi_1(b;x,t)&:=\mathbb{Q}_{0,x}^{1,t}\left[\exp\left(-\frac{b^2}{2}\int_0^t X_s\D{s}\right)\right],\\
\Psi_3(b;t)&:=\mathbb{Q}_{1,0}^{3,t}\left[\exp\left(-\frac{b^2}{2} \int_0^t X_s\D{s}\right)\right].
\end{align*}
Now we can use \eqref{eq:Laplace_expectation}, \eqref{eq:mixing_density}, and the aforementioned disintegration to write
\begin{align}
&\mathbb{E}\left[\exp\left(-\frac{b^2}{8} T_1 - \mu X_1\right)\right]\nonumber \\
&=\int_0^\infty \int_{-\infty}^\infty \frac{1}{2\pi t^2}e^{-\frac{1+x^2}{2t}}\Psi_1(b;x^2,t)\,\Psi_3(b;t)\,e^{-\mu(x^2+1)}\D{x}\D{t}\nonumber \\
&=\int_0^\infty \frac{1}{2\pi t^2}e^{-\frac{1}{2t}}\Psi_3(b;t)\int_{-\infty}^\infty \Psi_1(b;x^2,t)\,e^{-\mu(x^2+1)-\frac{x^2}{2t}}\D{x}\D{t}.\label{eq:disintegration}
\end{align}

After replacing $\Psi_1(b;x^2,t)$ with the formula from \eqref{eq:bridge_formula}, we see that the inner integral in \eqref{eq:disintegration} is just a Gaussian integral, so we can compute it explicitly as
\begin{align}
&\int_{-\infty}^\infty \Psi_1(b;x^2,t)\,e^{-\mu(x^2+1)-\frac{x^2}{2t}}\D{x}\nonumber \\
&=\int_{-\infty}^\infty \sqrt{\frac{bt}{\sinh(bt)}}\exp\left(\frac{x^2}{2t}\big(1-bt\coth(bt)\big)\right)\,e^{-\mu(x^2+1)-\frac{x^2}{2t}}\D{x}\nonumber \\
&=e^{-\mu}\sqrt{\frac{bt}{\sinh(bt)}}\int_{-\infty}^\infty \exp\left(-\frac{x^2}{2}\big(b\coth( bt)+2\mu\big)\right)\D{x}\nonumber \\
&=e^{-\mu}\sqrt{\frac{bt}{\sinh (bt)}}\sqrt{\frac{2\pi}{b\coth(bt)+2\mu}}.\label{eq:inner_integral}
\end{align}

Now we can replace the $\Psi_3(b;t)$ and the inner integral appearing in \eqref{eq:disintegration} with the formulas from \eqref{eq:bridge_formula} and \eqref{eq:inner_integral}, respectively, to get
\begin{align}
&\mathbb{E}\left[\exp\left(-\frac{b^2}{8} T_1 - \mu X_1\right)\right]\nonumber \\
&=\int_0^\infty \frac{1}{2\pi t^2}e^{-\frac{1}{2t}}\Psi_3(b;t)\,e^{-\mu}\sqrt{\frac{bt}{\sinh (bt)}}\sqrt{\frac{2\pi}{b\coth(bt)+2\mu}}\D{t}\nonumber \\
&=\frac{e^{-\mu}}{2\pi}\int_0^\infty e^{-\frac{b}{2}\coth(bt)}\big(b\csch(bt)\big)^2\sqrt{\frac{2\pi}{b\coth(b t)+2\mu}}\D{t}.\label{eq:coth_integral}
\end{align}

We can evaluate the definite integral in \eqref{eq:coth_integral} by making a sequence of two substitutions, the first of which is $u=b\coth(bt)$. With this substitution, we have $\D{u}=-(b\csch(bt))^2\D{t}$, and the old limits of integration $0$ and $\infty$ become the new limits $\infty$ and $b$, respectively. This results in the integral identity
\[
\mathbb{E}\left[\exp\left(-\frac{b^2}{8} T_1 - \mu X_1\right)\right]=\frac{e^{-\mu}}{2\pi}\int_b^\infty e^{-\frac{1}{2}u}\sqrt{\frac{2\pi}{u+2\mu}}\D{u}.
\]
Next, we make the substitution $u=2v^2-2\mu$. Hence, $\D{u}=4v\D{v}$, and the new limits of integration are $\sqrt{b/2+\mu}$ to $\infty$. This allows us to write
\begin{align}
\mathbb{E}\left[\exp\left(-\frac{b^2}{8} T_1 - \mu X_1\right)\right]&=\frac{e^{-\mu}}{2\pi}\int_{\sqrt{b/2+\mu}}^\infty e^{-v^2+\mu}\sqrt{\frac{2\pi}{2v^2}}4v\D{v} \nonumber \\
&=\frac{2}{\sqrt{\pi}}\int_{\sqrt{b/2+\mu}}^\infty e^{-v^2}\D{v}\nonumber \\
&=\erfc\left(\sqrt{\frac{b}{2}+\mu}\right).\label{eq:erfc_integral}
\end{align}

The last step in the proof of Theorem \ref{thm:joint_Laplace} involves using the distributional equality \eqref{eq:rho_scaling} to generalize the Laplace transform identity \eqref{eq:erfc_integral} to any $\rho>0$, as well as writing the identity in terms of $\lambda\geq 0$ instead of $b\geq 0$. In particular, we must have $b^2/8=\lambda\rho^2$, so $b=2\rho\sqrt{2\lambda}$ and the identity becomes
\begin{align*}
\mathbb{E}[\exp(-\lambda T_\rho - \mu X_\rho)]&=\mathbb{E}[\exp(-\lambda\rho^2 T_1 - \mu\rho X_1)]\\
&=\erfc\left(\sqrt{\rho\left(\sqrt{2\lambda}+\mu\right)}\right).
\end{align*}
\hfill\qed

\subsection{Proof of Theorem \ref{thm:conditional_structure}}\label{sec:conditional_structure}

Here we prove the remarkable conditional relationship between $T_\rho$ and $X_\rho$ that is claimed by Theorem \ref{thm:conditional_structure}. First note that setting $\lambda=0$ in Theorem \ref{thm:joint_Laplace} yields the marginal Laplace transform $\mathbb{E}[\exp(- \mu X_\rho)]=\erfc(\sqrt{\rho\mu})$. We can use this together with the full statement of Theorem \ref{thm:joint_Laplace} to deduce the identity
\[
\mathbb{E}\left[\exp\left(-\big(\sqrt{2\lambda}+\mu\big)X_\rho\right)\right]=\erfc\left(\sqrt{\rho\left(\sqrt{2\lambda}+\mu\right)}\right)=\mathbb{E}[\exp(-\lambda T_\rho - \mu X_\rho)],
\]
which holds for all $\lambda,\mu\geq 0$ and $\rho>0$. Rewriting the right-hand side using the conditional expectation of the bounded random variable $e^{-\lambda T_\rho}$ given $X_\rho$ results in
\begin{equation}\label{eq:marginal_joint}
\mathbb{E}\left[e^{-\mu X_\rho}e^{-\sqrt{2\lambda}X_\rho}\right]=\mathbb{E}\big[e^{-\mu X_\rho}\mathbb{E}[e^{-\lambda T_\rho}|X_\rho]\big].
\end{equation}

Next, fix $\lambda\geq 0$ and $\rho>0$, and note that the random variable $\mathbb{E}[e^{-\lambda T_\rho}|X_\rho]$ is measurable with respect to the $\sigma$-algebra generated by $X_\rho$. Hence, there exists a Borel function $g_\lambda:[\rho,\infty)\to [0,1]$ such that $\mathbb{E}[e^{-\lambda T_\rho}|X_\rho]=g_\lambda(X_\rho)$ almost surely; see \cite[Lemma 1.13]{Kallenberg}. Now for any $\mu\geq 0$, we can use \eqref{eq:marginal_joint} to write 
\begin{equation}\label{eq:positive_measures}
\mathbb{E}\left[e^{-\mu X_\rho}e^{-\sqrt{2\lambda}X_\rho}\right]=\mathbb{E}\big[e^{-\mu X_\rho}g_\lambda(X_\rho)\big].
\end{equation}

Both sides of \eqref{eq:positive_measures} can be interpreted as the Laplace transforms of finite measures which are absolutely continuous with respect to the law of $X_\rho$. More precisely, define the finite measures $\nu_1$ and $\nu_2$ supported on $[\rho,\infty)$ by
\[
\nu_1(\D{x}):=e^{-\sqrt{2\lambda}\, x}\,\mathbb{P}(X_\rho\in\D{x})~~\text{ and }~~\nu_2(\D{x}):=g_\lambda(x)\,\mathbb{P}(X_\rho\in\D{x}).
\]
Then \eqref{eq:positive_measures} implies that for all $\mu\geq 0$, we have
\[
\int_{[0,\infty)} e^{-\mu x}\nu_1(\D{x})=\int_{[0,\infty)} e^{-\mu x}\nu_2(\D{x}).
\]
Consequently, $\nu_1=\nu_2$ by the uniqueness theorem for Laplace transforms of finite measures on $[0,\infty)$. In particular, it follows that for all $\lambda\geq 0$, 
\[
e^{-\sqrt{2\lambda}X_\rho}= \mathbb{E}[e^{-\lambda T_\rho}|X_\rho]~~\text{almost surely}.
\]

The proof is complete once we recognize $e^{-\sqrt{2\lambda}\,x}$ with $x>0$ as the well-known Laplace transform of the first passage time to the level $x$ of one-dimensional Brownian motion starting at $0$; see Proposition 3.7 in Chapter II of \cite{Revuz_Yor}.
\hfill\qed


\subsection{Proof of Theorem \ref{thm:densities}}\label{sec:densities}

In this section we compute the joint and marginal densities of $(T_\rho,X_\rho)$. We start by computing the marginal density \eqref{eq:X_density} of $X_\rho$, which we then combine with the conditional structure of Theorem \ref{thm:conditional_structure} to derive the other densities. Towards this end, note that from \eqref{eq:rho_scaling} and \eqref{eq:half_plane} we get the equality in distribution 
\begin{equation}\label{eq:X_marginal}
X_\rho\stackrel{d}{=}\rho \left(\big(W_{\tau_\mathcal{H}}^{(1)}\big)^2+1\right)\text{under}~\mathbb{P}_{(0,1)},
\end{equation}
where $(W_t^{(1)}:t\geq 0)$ is the horizontal coordinate of planar Brownian motion starting at the point $(0,1)$ and running until it first exits the upper half-plane $\mathcal{H}$ at the random time $\tau_\mathcal{H}$. In this scenario, it is well known that $W_{\tau_\mathcal{H}}^{(1)}$ has the distribution of a standard Cauchy random variable $C$; see Proposition 3.11 in Chapter III of \cite{Revuz_Yor}. Hence, for any $x\geq \rho$ we can use \eqref{eq:X_marginal} and symmetry to write
\begin{align*}
\mathbb{P}(X_\rho\leq x)&=\mathbb{P}\mleft(\rho(C^2+1)\leq x\mright)\\
&=2\,\mathbb{P}\mleft(0\leq C\leq\sqrt{x/\rho-1}\mright)\\
&=\frac{2}{\pi}\arctan\mleft(\sqrt{x/\rho-1}\mright).
\end{align*}
Differentiating with respect to $x>\rho$ leads to the desired marginal density \eqref{eq:X_density}.

Next, we use the marginal density \eqref{eq:X_density} of $X_\rho$ along with the conditional relationship between $X_\rho$ and $T_\rho$ from Theorem \ref{thm:conditional_structure} to compute their joint density. This requires the well-known density of the first passage time to the level $x>0$ of one-dimensional Brownian motion starting at $0$; see Section 3 of Chapter III in \cite{Revuz_Yor}. Putting everything together results in the joint density \eqref{eq:joint_density}, that is,
\begin{align*}
\mathbb{P}(T_\rho\in\D{t},\,X_\rho\in \D{x}) &=\frac{x}{\sqrt{2\pi t^3}}e^{-\frac{x^2}{2t}}\D{t}\frac{\sqrt{\rho}}{\pi x\sqrt{x-\rho}}\D{x}\\
&= \frac{\sqrt{\rho}\,e^{-\frac{x^2}{2t}}}{\sqrt{2\pi^3 t^3 (x-\rho) }}\D{t}\D{x}, ~~ t>0,\, x>\rho.
\end{align*}

Lastly, we deduce the marginal density \eqref{eq:T_density} for $T_\rho$ by integrating the $x$ variable in the newfound joint density \eqref{eq:joint_density} from $\rho$ to $\infty$. We do this by first making the substitution $x=u^2+\rho$. Hence, $\D{x}=2u\D{u}$, and the new limits of integration are now $0$ to $\infty$. This allows us to write
\begin{align}
\mathbb{P}(T_\rho\in\D{t})&=\frac{\sqrt{\rho}}{\sqrt{2\pi^3 t^3}}\D{t}\int_0^\infty \frac{e^{-\frac{(u^2+\rho)^2}{2t}}}{u}2u\D{u}\nonumber \\
&=\frac{\sqrt{2\rho}\,e^{-\frac{\rho^2}{2t}}}{\sqrt{\pi^3 t^3}}\D{t}\int_0^\infty \exp\left(-\frac{1}{2t}u^4-\frac{\rho}{t}u^2\right)\D{u},~~t>0.\label{eq:K_integral}
\end{align}
To evaluate the integral on the right-hand side of \eqref{eq:K_integral}, we can use the identity
\begin{equation}\label{eq:GR_identity}
\int_0^\infty e^{-a x^4-2b x^2}\D{x}=\frac{1}{4}\sqrt{\frac{2b}{a}}\exp\mleft(\frac{b^2}{2a}\mright)K_\frac{1}{4}\mleft(\frac{b^2}{2a}\mright),~~\Re a\geq 0, \,b>0;
\end{equation}
see \cite[Equation 3.469.1]{Gradshteyn_Ryzhik} or \cite[Section 7]{GR_project} for several proofs. We recover the marginal density \eqref{eq:T_density} from \eqref{eq:K_integral} by taking $a=\frac{1}{2t}$ and $b=\frac{\rho}{2t}$ in \eqref{eq:GR_identity} to get
\begin{align*}
\mathbb{P}(T_\rho\in\D{t})&=\frac{\sqrt{2\rho}\,e^{-\frac{\rho^2}{2t}}}{\sqrt{\pi^3 t^3}}\D{t}\frac{\sqrt{2\rho}}{4}\exp\mleft(\frac{\rho^2}{4t}\mright)K_\frac{1}{4}\mleft(\frac{\rho^2}{4t}\mright)\\
&=\frac{\rho\,e^{-\frac{\rho^2}{4t}}}{2\sqrt{\pi^3 t^3}}K_\frac{1}{4}\mleft(\frac{\rho^2}{4t}\mright)\D{t},~~t>0.
\end{align*}
\hfill\qed


\subsection{Proof of Theorem \ref{thm:radial_distance}}\label{sec:radial_distance}

In this section we compute the densities and moments of the radial distances $R^{\mathrm{BM}}$ and $R^{\mathrm{BB}}$ which are defined in \eqref{eq:radial_distance}. We start by deducing the distributional equality $R^{\mathrm{BM}}\stackrel{d}{=} T_1^{-\frac{1}{2}}$, which follows from using the scaling relation \eqref{eq:rho_scaling} to write
\begin{align}
\mathbb{P}\mleft(R^\mathrm{BM}<\rho\mright)=\mathbb{P}\mleft(T_\rho>1\mright)&=\mathbb{P}\mleft(T_1>\frac{1}{\rho^2}\mright)\label{eq:radial_time}\\
&=\mathbb{P}\mleft(T_1^{-\frac{1}{2}}<\rho\mright).\nonumber
\end{align}
Now it is straightforward to derive the density of $R^\mathrm{BM}$ from \eqref{eq:radial_time} and \eqref{eq:T_density}. 

To compute the moments of $R^\mathrm{BM}$, we first obtain from Theorem \ref{thm:joint_Laplace} the marginal Laplace transform $\mathbb{E}[\exp(-\lambda T_1)]=\erfc\big((2\lambda)^{1/4}\big)$. Using Lemma \ref{lem:erfc_moments} with $c=2^{1/4}$ and $\alpha=\frac{1}{4}$ along with the  distributional equality yields 
\[
\mathbb{E}\mleft[\mleft(R^{\mathrm{BM}}\mright)^p\mright]=\mathbb{E}\mleft[T_1^{-\frac{p}{2}}\mright]=\frac{\Gamma\mleft(p+\frac{1}{2}\mright)}{\sqrt{2^p \pi}\,\Gamma\mleft(\frac{p}{2}+1\mright)},~~p>-\frac{1}{2}.
\]

The bridge case is treated next. While the end result is simpler, this case requires a bit more work. The basic idea is to use the space-time transformation \eqref{eq:space_time} to express the probability that planar Brownian bridge never hits the ray $\mathcal{R}_\rho$ as the probability that planar Brownian motion with negative horizontal drift never hits the same ray. We can then compute this latter probability using Girsanov's theorem and the joint Laplace transform from Theorem \ref{thm:joint_Laplace}.

We start by noticing that because $\boldsymbol{B}_0=\boldsymbol{B}_1$, we have $R^\mathrm{BB}<\rho$ if and only if $\boldsymbol{B}_t \notin \mathcal{R}_\rho$ for all $0\leq t<1$. Now the space-time transformation \eqref{eq:space_time} can be used along with the change of variables $s=\frac{t}{1-t}$ to write
\begin{align}
\mleft\{R^\mathrm{BB}<\rho\mright\}&=\mleft\{(1-t)\boldsymbol{W}_\frac{t}{1-t} \notin \mathcal{R}_\rho~\text{for all}~0\leq t<1\mright\}\nonumber \\
&=\mleft\{\frac{1}{1+s}\boldsymbol{W}_s \notin \mathcal{R}_\rho~\text{for all}~s\geq 0\mright\}\nonumber \\
&=\mleft\{\boldsymbol{W}_s \notin \mathcal{R}_{\rho+\rho s}~\text{for all}~s\geq 0\mright\}\nonumber \\
&=\big\{\boldsymbol{W}_s -(\rho s,0)\notin \mathcal{R}_\rho~\text{for all}~s\geq 0\big\}.\label{eq:bridge_drift}
\end{align}

Define the vector $\boldsymbol{h}_\rho:=(\rho,0)$. Then the process $(\boldsymbol{W}_s -(\rho s,0):s\geq 0)$ appearing in the event on the right-hand side of \eqref{eq:bridge_drift} is simply planar Brownian motion with the constant drift vector $-\boldsymbol{h}_\rho$. In particular, letting $(M_t:t\geq 0)$ denote the exponential martingale defined by 
\[
M_t=\exp\mleft(-\boldsymbol{W}_t\cdot\boldsymbol{h}_\rho-\frac{1}{2}|\boldsymbol{h}_\rho|^2 t\mright),
\]
we can use the Girsanov and optional stopping theorems to show that 
\begin{align}
\mathbb{P}\big(\boldsymbol{W}_s -\boldsymbol{h}_\rho s\in \mathcal{R}_\rho~\text{for some}~0\leq s\leq n\big)&=\mathbb{E}[\mathbbm{1}_{\{T_\rho\leq n\}}M_n]\nonumber \\
&=\mathbb{E}\big[\mathbb{E}[\mathbbm{1}_{\{T_\rho\leq n\}}M_n|\mathcal{F}_{T_\rho\wedge n}]\big]\nonumber \\
&=\mathbb{E}\mleft[\mathbbm{1}_{\{T_\rho\leq n\}}M_{T_\rho\wedge n}\mright]\nonumber \\
&=\mathbb{E}\mleft[\mathbbm{1}_{\{T_\rho\leq n\}}M_{T_\rho}\mright].\label{eq:bounded_Girsanov}
\end{align}
In light of \eqref{eq:bridge_drift}, letting $n\to\infty$ on both sides of \eqref{eq:bounded_Girsanov} while using monotone convergence leads to 
\begin{align}
\mathbb{P}(R^\mathrm{BB}<\rho)&=1-\mathbb{P}\big(\boldsymbol{W}_s -\boldsymbol{h}_\rho s\in \mathcal{R}_\rho~\text{for some}~s\geq 0\big)\nonumber \\
&=1-\mathbb{E}\mleft[\mathbbm{1}_{\{T_\rho<\infty\}}\exp\mleft(-\boldsymbol{W}_{T_\rho}\cdot\boldsymbol{h}_\rho-\frac{1}{2}|\boldsymbol{h}_\rho|^2 T_\rho\mright)\mright]\nonumber \\
&=1-\mathbb{E}\mleft[\mathbbm{1}_{\{T_\rho<\infty\}}\exp\mleft(-\rho\,X_\rho-\frac{1}{2}\rho^2 T_\rho\mright)\mright].\label{eq:Girsanov}
\end{align}

Since planar Brownian motion (without drift) almost surely hits $\mathcal{R}_\rho$ in finite time, the indicator appearing on the right-hand side of \eqref{eq:Girsanov} is redundant. Hence, we can use the joint Laplace transform formula from Theorem \ref{thm:joint_Laplace} along with the representation \eqref{eq:erfc_Z} involving the standard normal $Z$ to rewrite \eqref{eq:Girsanov} as
\[
\mathbb{P}(R^\mathrm{BB}<\rho)=1-\erfc\mleft(\sqrt{2}\,\rho\mright)=\mathbb{P}\mleft(\frac{1}{2}|Z|<\rho\mright).
\]
Now it follows that $R^{\mathrm{BB}}\stackrel{d}{=} \frac{1}{2}|Z|$ and that $R^{\mathrm{BB}}$ has the half-normal distribution with variance $\frac{1}{4}$. Moreover, we can get from \eqref{eq:half_normal} the moment formula 
\[
\mathbb{E}\mleft[\mleft(R^{\mathrm{BB}}\mright)^p\mright]=\frac{1}{2^p}\mathbb{E}\big[|Z|^p\big]=\frac{\Gamma\mleft(\frac{p}{2}+\frac{1}{2}\mright)}{\sqrt{2^p \pi}},~~p>-1.
\]
\hfill\qed


\subsection{Proofs of Theorems \ref{thm:bridge_areas} and \ref{thm:BM_star_area}}\label{sec:areas}

First we prove the expected area formulas in the planar Brownian bridge case.
\begin{proof}[Proof of Theorem \ref{thm:bridge_areas}]
The planar Brownian bridge $(\boldsymbol{B}_t:0\leq t\leq 1)$ satisfies all three conditions of Lemma \ref{lem:mean_area_formula}, so we can use that result and Theorem \ref{thm:radial_distance} to write
\begin{align*}
\mathbb{E}\big[\mathrm{area}(\mathcal{S}^{\mathrm{BB}})\big]=\pi\,\mathbb{E}\mleft[\mleft(R^{\mathrm{BB}}\mright)^2\mright]&=\pi\frac{\Gamma\mleft(\frac{3}{2}\mright)}{\sqrt{4\pi}}\\
&=\frac{\pi}{4}.
\end{align*}
This proves Theorem \ref{thm:bridge_areas} since the other two identities are already known.
\end{proof}

We conclude by treating the expected areas in the nonbridge case.
\begin{proof}[Proof of Theorem \ref{thm:BM_star_area}]
The planar Brownian motion $(\boldsymbol{W}_t:0\leq t\leq 1)$ satisfies all three conditions of Lemma \ref{lem:mean_area_formula}, so we can use that result and Theorem \ref{thm:radial_distance} to write
\begin{align*}
\mathbb{E}\big[\mathrm{area}(\mathcal{S}^{\mathrm{BM}})\big]=\pi\,\mathbb{E}\mleft[\mleft(R^{\mathrm{BM}}\mright)^2\mright]&=\pi\frac{\Gamma\mleft(\frac{5}{2}\mright)}{\sqrt{4 \pi}\,\Gamma\mleft(2\mright)}\\
&=\frac{3\pi}{8}.
\end{align*}
This proves Theorem \ref{thm:BM_star_area} since the convex hull identity is already known and the topological hull inequality follows from the inclusion chain of Lemma \ref{lem:inclusion_chain}.
\end{proof}


\section{Future directions}\label{sec:open}

Here we discuss a few open questions that merit further study.

\begin{enumerate}

\item \textbf{Boundary of the star hull}\\
Unlike the convex hull, the boundary of the star hull inherits some of the roughness of the Brownian path so presumably it is nonrectifiable. However, it's not clear whether the boundary is truly fractal with Hausdorff dimension greater than $1$. On the other hand, the boundary of the topological hull is known to have Hausdorff dimension $\frac{4}{3}$. This was first conjectured by Mandelbrot \cite{Mandelbrot} and later proven by Lawler--Schramm--Werner \cite{frontier_dim}. See Figure \ref{fig:illustrations} for depictions of the boundaries of all three hulls.

\item \label{Q:top_area} \textbf{Expected area of $\mathcal{T}^{\mathrm{BM}}$}\\
As mentioned prior to Theorem \ref{thm:BM_star_area}, the expected area of the topological hull of planar Brownian motion is unknown, though the question has attracted some interest as evidenced by the MathOverflow posts \cite{Dominik, Nate}. The present author has produced a Monte Carlo estimate of $0.5911$ for this expected area by simulating $10^5$ random walk paths on $\mathbb{Z}^2$, each with $10^5$ steps, and then properly scaling and averaging the integer-valued areas computed by applying the shoelace formula to the outer boundary obtained from the algorithm described in \cite{Richard}. We note in passing that this estimate is close to another rational multiple of $\pi$, namely, $\frac{3\pi}{16}\approx 0.58905$.

\item \textbf{Second moments of the hull areas}\\
The second moment of the area of $\mathcal{T}^{\mathrm{BB}}$ was expressed as the integral of a complicated function in \cite{bubble_variance}, though the authors were unable to obtain an accurate numerical evaluation. No expression is known in the convex or star hull case for either process. However, it is worth mentioning that the second moment of the \emph{perimeter} of $\mathcal{C}^{\mathrm{BM}}$ was given as a complicated integral in \cite{Wade_Xu}; see \cite{hull_bounds} for remarks on the numerical evaluation of this integral.

\item \textbf{Symmetric $\alpha$-stable processes}\\
The closed convex hull of the symmetric $\alpha$-stable process in $\mathbb{R}^d$ has been studied in \cite{stable_hulls, Levy_hulls, stable_walk_hulls}, and there are formulas for its expected intrinsic volumes when $\alpha>1$; see \cite[Example 2.6]{Levy_hulls}. When $\alpha>1$ and $d=2$, the methods of the present paper can be used to compute the expected area of the star hull in this case as soon as the distribution of the corresponding ray-hitting time $T_1$ is known. See \cite{Isozaki_stable} for some partial results in this direction.

\end{enumerate}

\bigskip

\begin{acknowledgments}
The author would like to thank Chris Burdzy and Greg Lawler for helpful comments and discussions
concerning Theorem \ref{thm:conditional_structure}, as well as Zakhar Kabluchko and Peter Pivovarov for offering pointers to the literature on zonoid hulls and star hulls, respectively.
\end{acknowledgments}


\bibliography{star_bib}
\bibliographystyle{amsplainabbrev}

\end{document}